\documentclass[12pt,a4paper]{article}
\usepackage{}

\marginparwidth 0pt \oddsidemargin 0pt \evensidemargin 0pt
\topmargin -1.4 cm \textheight 24.5 truecm \textwidth 16.0 truecm
\parskip 4pt

\usepackage{latexsym}
\usepackage{amsmath}
\usepackage{amssymb}
\usepackage{arydshln}

\usepackage{cite}
\usepackage{stmaryrd}
\usepackage{enumerate}

\usepackage{hyperref}

\usepackage{color}
\usepackage{lineno}
\usepackage{graphicx}
\usepackage{ae}
\usepackage{amsmath}
\usepackage{amssymb}
\usepackage{latexsym}
\usepackage{url}
\usepackage{epsfig}
\usepackage{mathrsfs}
\usepackage{amsfonts}
\usepackage{amsthm}

\usepackage{float}
\usepackage{subfig}
\usepackage{tikz}
\usetikzlibrary{positioning}






\newtheorem{theorem}{Theorem}[section]

\newtheorem{lemma}[theorem]{Lemma}

\theoremstyle{definition}



\numberwithin{equation}{section} 
\allowdisplaybreaks    


\def\qed{\hfill$\Box$\vspace{12pt}}



\long\def\delete#1{}



\usepackage{xcolor}
\usepackage[normalem]{ulem}


\begin{document}
\title {Laplacian pretty good edge state transfer in paths}

\author{Wei Wang$^{a,b}$,~Xiaogang Liu$^{a,b,}$\thanks{Supported by the National Natural Science Foundation of China (No. 11601431), the Natural Science Foundation of Shaanxi Province (No. 2020JM-099) and the Natural Science Foundation of Qinghai Province  (No. 2020-ZJ-920).}~$^,$\thanks{ Corresponding author. Email addresses: wang-wei@mail.nwpu.edu.cn, xiaogliu@nwpu.edu.cn, wj66@mail.nwpu.edu.cn}~,~Jing Wang$^{a,b}$
\\[2mm]
{\small $^a$School of Mathematics and Statistics,}\\[-0.8ex]
{\small Northwestern Polytechnical University, Xi'an, Shaanxi 710072, P.R.~China}\\
{\small $^b$Xi'an-Budapest Joint Research Center for Combinatorics,}\\[-0.8ex]
{\small Northwestern Polytechnical University, Xi'an, Shaanxi 710129, P.R. China}
}

\date{}

\openup 0.5\jot
\maketitle

\begin{abstract}

In this paper, we first give a necessary and sufficient condition for a graph to
have Laplacian pretty good pair state transfer. As an application of such result, we give a complete characterization of Laplacian pretty good edge state transfer in paths.

\smallskip

\emph{Keywords:} Laplacian eigenvalues; Pair-LPGST; Edge-LPGST; Path

\emph{Mathematics Subject Classification (2010):} 05C50, 81P68
\end{abstract}

\section{Introduction}

Let $G$ be a graph and let $a$ and $b$ be two distinct vertices in $G$. We usually use $\{a,b\}$ to denote such a pair of vertices in $G$. Let $\mathbf{e}_{a}$ denote the unit vector with only the $a$-th entry equal to $1$ and all other entries equal to $0$. Then $\mathbf{e}_a-\mathbf{e}_b$ denotes the \emph{pair state} of $\{a,b\}$. In particular, if $a$ and $b$ are connected by an edge in $G$, then such a pair state is called an \emph{edge state}. Let $A_{G}$ be the adjacency matrix of $G$, and let $L_G=D_G-A_G$ denote the Laplacian matrix of $G$, where $D_G$ is the diagonal matrix with diagonal entries the degrees of vertices of $G$. The \emph{transition matrix} of $G$ relative to $L_{G}$ is defined by
$$U_{L_{G}}(t) = \exp(-\mathrm{i}tL_{G})=\sum_{k\geq0}\frac{(-\mathrm{i})^{k}L_{G}^{k}t^{k}}{k!}, ~ t \in \mathbb{R},~\mathrm{i}=\sqrt{-1}.$$
Let $\{a,b\}$ and $\{c,d\}$ be two pairs of vertices in $G$. We say $G$ has \emph{Laplacian perfect pair state transfer}
(Pair-LPST for short) \cite{QCh18, Chen20} from $\{a,b\}$ to $\{c,d\}$ at time $\tau$ if and only if
$$U_{L_{G}}(\tau)(\mathbf{e}_{a}-\mathbf{e}_{b})=\gamma(\mathbf{e}_{c}-\mathbf{e}_{d})$$
where $\gamma$ is a complex scalar and $\left|\gamma\right|=1$, which is equivalent to
\begin{align*}
\left|\frac{1}{2}(\mathbf e_{a}-\mathbf e_{b})^\top e^{-\mathrm{i}\tau L_{G}}(\mathbf e_{c}-\mathbf e_{d})\right|^{2}=1,
\end{align*}
where $\ast^\top$ denotes the transpose of $\ast$. In this case,  we also say that $\mathbf e_{a}-\mathbf e_{b}$ (respectively, $\mathbf e_{c}-\mathbf e_{d}$)  has  Pair-LPST; otherwise, $\mathbf e_{a}-\mathbf e_{b}$ (respectively, $\mathbf e_{c}-\mathbf e_{d}$)  has  no Pair-LPST. Note that Pair-LPST does not always exist in a graph. Thus, a relaxation of Pair-LPST, the \emph{Laplacian pretty good pair state transfer} (short for Pair-LPGST), was proposed \cite{WangLJ21}. We say a graph $G$ has Pair-LPGST from $\{a,b\}$ to $\{c,d\}$ if for any $\epsilon>0$, there exists a time $\tau$ such that
$$
\left|U_{L_{G}}(\tau)(\mathbf{e}_{a}-\mathbf{e}_{b}) -\gamma(\mathbf{e}_{c}-\mathbf{e}_{d})\right|<\epsilon,
$$
where $\gamma$ is a complex scalar and $\left|\gamma\right|=1$, which is equivalent to
\begin{align*}
\left|\frac{1}{2}(\mathbf e_{a}-\mathbf e_{b})^\top e^{-\mathrm{i}\tau L_{G}}(\mathbf e_{c}-\mathbf e_{d})\right|^{2}> 1-\epsilon.
\end{align*}
If both $\mathbf e_{a}-\mathbf e_{b}$ and $\mathbf e_{c}-\mathbf e_{d}$ are edge states in above definitions, then Pair-LPST and Pair-LPGST are also called Edge-LPST and Edge-LPGST, respectively.

Pair state transfers can be regarded as generalizations of state transfers, which are very important in quantum computing and quantum information processing \cite{Chri17,Kay10,Kemp17}. Up until now, many graphs have been shown to admit state transfers, including trees \cite{Ban17, Bose03, CoutinhoL2015, CouG17, GOD12}, Cayley graphs \cite{Basic13, CaoCL20, CaoF21, CaoWF20, CC, HPal3, HPal5, Tan19}, distance regular graphs \cite{Coutinho15} and some graph operations such as NEPS \cite{chris1, chris2, LiLZZ21, HPal1, HPal4, SZ}, coronas \cite{Ack, AckBCMT16} and joins \cite{Angeles10, AlvirDLM16}. For more information, we refer the reader to \cite{Coutinho14, Coh19, Eisen19, CGodsil, Godsil12, HZ, Zhou14}. Numerical results in \cite{Chen20} have shown that there should exist more graphs having pair state transfers than state transfers for a certain number of vertices. However, only few results on pair state transfers are given now. In 2018, Chen \cite{QCh18}  first proposed the concept of Edge-LPST in his thesis. He presented many results on Edge-LPST, including the existence of Edge-LPST in paths and cycles, and he also gave two methods to construct new graphs with Edge-LPST. Those results can also be found in the published Paper \cite{Chen20}, where Pair-LPST was used instead of Edge-LPST. In 2021, Luo, Cao, et al. \cite {LCXC21} gave necessary and sufficient conditions for the existence of Edge-LPST in Cayley graphs of dihedral groups. With these conditions, the authors also proposed several concrete constructions of Cayley graphs admitting Edge-LPST. In the same year, Cao \cite{Cao2021} gave a characterization of cubelike graphs having Edge-LPST. The author also gave some concrete constructions which can be used to obtain some classes of infinite graphs possessing Edge-LPST. In 2022, Wang, Liu, et al. \cite{WangLJ21} gave sufficient conditions for vertex corona to have or to not have Pair-LPST. They also give a sufficient condition for vertex corona to have Pair-LPGST. These seem to be the newest results on Pair-LPST and Pair-LPGST.

As mentioned above, only few results on Pair-LPST and Pair-LPGST are given now. Thus, in this paper, we continue to study this issue. In this paper, we first give a necessary and sufficient condition for a graph to
have Pair-LPGST. As an application of such result, we give a complete characterization of Edge-LPGST in paths.

\section{Necessary and sufficient condition for a graph to have Pair-LPGST}
\label{Sec:main2}

In this section, we give a necessary and sufficient condition for a graph to have Pair-LPGST. Before proceeding, we present some notations and results.

Let $G$ be a graph on $n$ vertices with its Laplacian matrix $L_G$.  The eigenvalues of $L_G$ are called the \emph{Laplacian eigenvalues} of $G$. Suppose that $L_G$ has eigenvalues $\mu_1,  \mu_2, \ldots , \mu_n$ with the corresponding eigenvectors $\mathbf{x}_1, \mathbf{x}_2,  \ldots , \mathbf{x}_n$ satisfying that $\mathbf{x}_r^{H}\mathbf{x}_r=1$, $\mathbf{x}_r^{H}\mathbf{x}_s=0$ for $r\neq s$, where $\mathbf{x}_r^{H}$ denotes the conjugated transpose of $\mathbf{x}_r$. Define
$$
F_{\mu_r}:=\mathbf{x}_r\mathbf{x}_r^H, ~r=1, 2, \ldots , n,
$$
which is usually called the \emph{eigenprojector} corresponding to  $\mu_r$ of $G$. Note that $\sum_{r=1}^nF_{\mu_r}=I$ (the identity matrix).
Then
\begin{equation}\label{SpecDec11}
L_G=L_G\sum\limits_{r=1}^{n}F_{\mu_r}
=\sum\limits_{r=1}^{n}L_G \mathbf{x}_{r}\mathbf{x}_r^H =\sum\limits_{r=1}^{n}\mu_{r}\mathbf{x}_{r}\mathbf{x}_r^H =\sum\limits_{r=1}^{n}\mu_{r}F_{\mu_r},
\end{equation}
which is called the \emph{spectral decomposition of $L_G$ with respect to the whole eigenvalues} (see \cite[Theorem 2.5.1, Page 27]{Godsil93}). Note that $F_{\mu_r}^{2}=F_{\mu_r}$ and $F_{\mu_r}F_{\mu_s}=\mathbf{0}_n$ for $r\neq s$, where $\mathbf{0}_n$ denotes the zero matrix of size $n$. So, by (\ref{SpecDec11}), we have
\begin{equation}\label{H_ADec2}
U_{L_G}(t)=\sum_{k= 0}^{\infty}\dfrac{(-\mathrm{i})^{k}L_G^{k}t^{k}}{k!}=\sum_{k= 0}^{\infty}\dfrac{(-\mathrm{i})^{k}(\sum_{r=1}^{n}\mu_{r}^{k}F_{\mu_r})t^{k}}{k!} =\sum_{r=1}^{n}\exp(-\mathrm{i}t\mu_{r})F_{\mu_r}.
\end{equation}

Let $\{a,b\}$ be a pair of vertices in $G$. The \emph{Laplacian eigenvalue support} of $\mathbf{e}_{a}-\mathbf{e}_{b}$ in $G$, denoted by $\mathrm{{supp}}_{L_G}(\mathbf{e}_{a}-\mathbf{e}_{b})$, is the set of all eigenvalues $\theta$ of $L_G$ such that
$F_\theta\mathbf(\mathbf{e}_{a}-\mathbf{e}_{b})\neq \mathbf{0}$. Two pairs of vertices $\{a,b\}$ and $\{c,d\}$ are \emph{Laplacian strongly cospectral} if $F_\theta\mathbf(\mathbf{e}_{a}-\mathbf{e}_{b})=\pm F_\theta\mathbf(\mathbf{e}_c-\mathbf{e}_d)$ for each eigenvalue $\theta$ of $L_G$. Let $\Lambda^{+}_{ab,cd}$ and $\Lambda^{-}_{ab,cd}$ denote the sets of all eigenvalues $\theta\in\mathrm{{supp}}_{L_G}(\mathbf{e}_{a}-\mathbf{e}_{b})$ such that $F_\theta\mathbf(\mathbf{e}_{a}-\mathbf{e}_{b})= F_\theta\mathbf(\mathbf{e}_c-\mathbf{e}_d)$
and $F_\theta\mathbf(\mathbf{e}_{a}-\mathbf{e}_{b})=- F_\theta\mathbf(\mathbf{e}_c-\mathbf{e}_d)$, respectively.

The Kronecker's theorem plays an important role in proving the result.

\begin{lemma}\emph{(Kronecker's theorem, see \cite[Chapter 3]{LeZ82})}
\label{Solution}
Let $\theta_{0},\theta_{1},\ldots,\theta_{d}$ and $\zeta_{0},\zeta_{1},\ldots,\zeta_{d}$ be arbitrary real numbers. For the system of inequalities $$|\theta_{k}t-\zeta_{k}|<\epsilon \pmod{2\pi}  ~~~~(k=0,1,\ldots, d)$$
to have consistent real solutions for any arbitrarily small positive number $\epsilon$, it is necessary and sufficient that every time the relation
$$l_{0}\theta_{0}+l_{1}\theta_{1}+\cdots+l_{d}\theta_{d}=0$$
holds, where $l_0, l_1, \ldots , l_d$ are integers, we have the congruence
$$l_{0}\zeta_{0}+l_{1}\zeta_{1}+\cdots+l_{d}\zeta_{d}\equiv0 \pmod {2\pi}.$$
\end{lemma}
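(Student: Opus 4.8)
The plan is to transfer the whole statement to the $(d+1)$-dimensional torus $\mathbb{T}^{d+1}=(\mathbb{R}/2\pi\mathbb{Z})^{d+1}$ and read it as a statement about the closure of a one-parameter subgroup. Write $\theta=(\theta_{0},\ldots,\theta_{d})$, $\zeta=(\zeta_{0},\ldots,\zeta_{d})$, and let $\bar\theta,\bar\zeta$ be their images in $\mathbb{T}^{d+1}$. By the definition of the product metric on $\mathbb{T}^{d+1}$, the system $|\theta_{k}t-\zeta_{k}|<\epsilon\ (\mathrm{mod}\ 2\pi)$ $(k=0,\ldots,d)$ being solvable in $t\in\mathbb{R}$ for every $\epsilon>0$ is exactly the assertion that $\bar\zeta$ lies in the closure $K:=\overline{\{t\bar\theta:t\in\mathbb{R}\}}$ of the one-parameter subgroup $\Gamma$ generated by $\bar\theta$. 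So the lemma reduces to: $\bar\zeta\in K$ if and only if for every $l\in\mathbb{Z}^{d+1}$ with $\langle l,\theta\rangle=0$ one has $\langle l,\zeta\rangle\equiv 0\ (\mathrm{mod}\ 2\pi)$. The bridge between the two sides will be the characters $\chi_{l}\colon\bar x\mapsto e^{\mathrm{i}\langle l,x\rangle}$, $l\in\mathbb{Z}^{d+1}$, of $\mathbb{T}^{d+1}$.

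I would first dispose of necessity, which needs no Kronecker machinery. Suppose the system is solvable for all $\epsilon$, and fix integers $l_{0},\ldots,l_{d}$ with $\sum_{k}l_{k}\theta_{k}=0$. Given $\epsilon>0$, choose $t$ and integers $m_{k}$ with $\delta_{k}:=\theta_{k}t-\zeta_{k}-2\pi m_{k}$ satisfying $|\delta_{k}|<\epsilon$. Multiplying by $l_{k}$ and summing gives $0=\sum_{k}l_{k}\theta_{k}t=\sum_{k}l_{k}\zeta_{k}+2\pi\sum_{k}l_{k}m_{k}+\sum_{k}l_{k}\delta_{k}$, so $\sum_{k}l_{k}\zeta_{k}\equiv-\sum_{k}l_{k}\delta_{k}\ (\mathrm{mod}\ 2\pi)$ with $\bigl|\sum_{k}l_{k}\delta_{k}\bigr|\le\bigl(\sum_{k}|l_{k}|\bigr)\epsilon$. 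Since the left-hand side is a fixed real number and $\epsilon$ is arbitrary, $\sum_{k}l_{k}\zeta_{k}\equiv 0\ (\mathrm{mod}\ 2\pi)$.

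For sufficiency I would invoke the structural fact that in a compact abelian group every closed subgroup equals the intersection of the kernels of the characters that vanish on it (equivalently, the annihilator identity $H^{\perp\perp}=H$, a consequence of Peter--Weyl/Pontryagin duality, or, on the torus, of the density of trigonometric polynomials in $C(\mathbb{T}^{d+1})$). Apply this to $K$. A character $\chi_{l}$ is trivial on $\Gamma$ exactly when $e^{\mathrm{i}t\langle l,\theta\rangle}=1$ for all $t\in\mathbb{R}$, i.e.\ when $\langle l,\theta\rangle=0$; and by continuity $\chi_{l}$ vanishes on $K$ iff it vanishes on $\Gamma$. Hence $K=\bigcap_{\langle l,\theta\rangle=0}\ker\chi_{l}=\{\bar x:\langle l,x\rangle\equiv 0\ (\mathrm{mod}\ 2\pi)\ \text{whenever}\ \langle l,\theta\rangle=0\}$. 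The hypothesis of the lemma says precisely that $\bar\zeta$ satisfies these congruences, so $\bar\zeta\in K$, which is the required solvability.

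The main obstacle is exactly this structural input — justifying that the characters vanishing on $K$ cut $K$ out (equivalently, that the integer relations among the $\theta_{k}$ are the only obstructions). If one prefers a self-contained route, the alternative is Weyl's equidistribution theorem: discard the indices with $\theta_{k}=0$ (the relation $1\cdot\theta_{k}=0$ then forces $\zeta_{k}\equiv 0$, so those inequalities hold automatically), extract a maximal $\mathbb{Q}$-linearly independent subset $\theta_{k_{1}},\ldots,\theta_{k_{m}}$ of the remaining entries so that $t\mapsto(\theta_{k_{1}}t,\ldots,\theta_{k_{m}}t)$ is equidistributed in $\mathbb{T}^{m}$, and write each remaining $\theta_{j}$ as a rational combination $\sum_{i}q_{ji}\theta_{k_{i}}$; clearing denominators produces the integer relations to which the hypothesis applies, pinning down each $\zeta_{j}$ modulo $2\pi$ in terms of $\zeta_{k_{1}},\ldots,\zeta_{k_{m}}$ — the one spot where the denominators demand care — after which equidistribution supplies a suitable $t$. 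In either approach the necessity half is routine and all the substance lies in the converse.
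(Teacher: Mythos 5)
The paper does not prove this lemma at all: it is quoted verbatim as Kronecker's theorem from Levitan and Zhikov \cite[Chapter 3]{LeZ82}, so there is no in-paper argument to compare against. Your proof is correct and is the standard Pontryagin-duality proof of that classical result. The torus reformulation is exactly right: solvability of the system for every $\epsilon>0$ in the metric mod $2\pi$ is, coordinate by coordinate, the statement that $\bar\zeta$ lies in the closure $K$ of the one-parameter subgroup generated by $\bar\theta$. Your necessity argument is complete and elementary (the identity $\sum_k l_k\zeta_k\equiv-\sum_k l_k\delta_k \pmod{2\pi}$ with $\bigl|\sum_k l_k\delta_k\bigr|\le(\sum_k|l_k|)\epsilon$, plus closedness of $2\pi\mathbb{Z}$, does the job). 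For sufficiency you correctly isolate the one nontrivial input, the bipolar identity $H^{\perp\perp}=H$ for closed subgroups of $\mathbb{T}^{d+1}$, and the computation $\Gamma^{\perp}=K^{\perp}=\{l:\langle l,\theta\rangle=0\}$ is right; this annihilator theorem is genuinely where all the content of Kronecker's theorem lives, and invoking it (with the Stone--Weierstrass/Peter--Weyl justification you name) is legitimate for a proof of a cited classical fact. One caution: your alternative route via Weyl equidistribution is only a sketch, and the denominator issue you flag is a real gap in that sketch --- clearing denominators in $\theta_j=\sum_i q_{ji}\theta_{k_i}$ yields $N\zeta_j\equiv\sum_i p_{ji}\zeta_{k_i}\pmod{2\pi}$, which pins $\zeta_j$ down only modulo $2\pi/N$, so extra work (e.g.\ passing to the closed subgroup of $\mathbb{T}^{m+1}$ cut out by those relations rather than to all of $\mathbb{T}^{m}$) is needed there. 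Since that route is offered only as an alternative, it does not affect the correctness of your main argument.
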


\begin{theorem}\label{lp}
Let $G$ be a graph with two pairs of vertices $\{a,b\}$ and $\{c,d\}$. Then $G$ has Pair-LPGST from $\{a,b\}$ to $\{c,d\}$ if and only if the following conditions hold:
\begin{itemize}
\item[\rm (1)] $\mathbf{e}_a-\mathbf{e}_b$ and $\mathbf{e}_c-\mathbf{e}_d$ are Laplacian strongly cospectral.
\item[\rm (2)]  whenever there are integers $\{l_{i}\}$ and $\{m_{j}\}$ such that
\begin{align*}
\sum_{\lambda_{i}\in\Lambda^{+}_{ab,cd}}l_{i}\lambda_{i} +\sum_{\mu_{j}\in\Lambda^{-}_{ab,cd}}m_{j}\mu_{j}&=0,\\
\sum_{i}l_{i}+\sum_{j}m_{j}&=0,
\end{align*}
we have that $\sum\limits_{j}m_{j}$ is even.
\end{itemize}
\end{theorem}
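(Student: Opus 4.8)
The plan is to reduce the defining condition of Pair-LPGST, via the spectral decomposition \eqref{H_ADec2}, to a family of simultaneous Diophantine approximations to which Kronecker's theorem (Lemma~\ref{Solution}) applies. Write $\theta_1,\dots,\theta_m$ for the distinct Laplacian eigenvalues of $G$, with eigenprojectors $F_{\theta_1},\dots,F_{\theta_m}$, so that $U_{L_G}(\tau)=\sum_{l=1}^{m}e^{-\mathrm{i}\tau\theta_l}F_{\theta_l}$. Set $v_\theta:=F_\theta(\mathbf{e}_a-\mathbf{e}_b)$; the nonzero $v_\theta$ lie in distinct eigenspaces of the real symmetric matrix $L_G$, hence are mutually orthogonal, and they are indexed precisely by $S:=\mathrm{supp}_{L_G}(\mathbf{e}_a-\mathbf{e}_b)$, so $c:=\min_{\theta\in S}\|v_\theta\|^2>0$ while $\sum_{\theta}\|v_\theta\|^2=\|\mathbf{e}_a-\mathbf{e}_b\|^2=2$.

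I would first show that (1) is necessary. Assuming Pair-LPGST, choose for each $k$ a time $\tau_k$ and a scalar $\gamma_k$ with $|\gamma_k|=1$ realizing the definition with $\epsilon=1/k$. The tuple $(e^{-\mathrm{i}\tau_k\theta_1},\dots,e^{-\mathrm{i}\tau_k\theta_m},\gamma_k)$ lies in the compact torus $\mathbb{T}^{m+1}$, so along a subsequence it converges; consequently $U_{L_G}(\tau_k)$ converges to a unitary matrix $M=\sum_{l}e^{\mathrm{i}\psi_l}F_{\theta_l}$ that commutes with $L_G$, and letting $k\to\infty$ in the defining inequality gives $M(\mathbf{e}_a-\mathbf{e}_b)=\gamma(\mathbf{e}_c-\mathbf{e}_d)$ for some $\gamma$ with $|\gamma|=1$. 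Applying $F_\theta$ to both sides yields $e^{\mathrm{i}\psi_\theta}v_\theta=\gamma F_\theta(\mathbf{e}_c-\mathbf{e}_d)$ for every eigenvalue $\theta$; since both sides are real vectors of equal norm, $v_\theta$ and $F_\theta(\mathbf{e}_c-\mathbf{e}_d)$ are parallel real vectors of equal length, forcing $v_\theta=\pm F_\theta(\mathbf{e}_c-\mathbf{e}_d)$. Thus $\mathbf{e}_a-\mathbf{e}_b$ and $\mathbf{e}_c-\mathbf{e}_d$ are Laplacian strongly cospectral, their supports agree, and $S=\Lambda^{+}_{ab,cd}\sqcup\Lambda^{-}_{ab,cd}$.

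Assuming (1), I would then rewrite the error vector using strong cospectrality as
$$U_{L_G}(\tau)(\mathbf{e}_a-\mathbf{e}_b)-\gamma(\mathbf{e}_c-\mathbf{e}_d)=\sum_{\theta\in\Lambda^{+}_{ab,cd}}(e^{-\mathrm{i}\tau\theta}-\gamma)v_\theta+\sum_{\theta\in\Lambda^{-}_{ab,cd}}(e^{-\mathrm{i}\tau\theta}+\gamma)v_\theta.$$
Since the $v_\theta$ are orthogonal with $c\le\|v_\theta\|^2$ and $\sum_\theta\|v_\theta\|^2=2$, the norm of this vector lies between $\sqrt{c}$ and $\sqrt{2}$ times the largest of the numbers $|e^{-\mathrm{i}\tau\theta}-\gamma|$ ($\theta\in\Lambda^{+}_{ab,cd}$) and $|e^{-\mathrm{i}\tau\theta}+\gamma|$ ($\theta\in\Lambda^{-}_{ab,cd}$); hence Pair-LPGST is equivalent to these numbers being simultaneously arbitrarily small for suitable $\tau$ and $\gamma$. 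Interchanging $c$ and $d$ if necessary (which alters neither the hypotheses nor the conclusion of the theorem) we may assume $\Lambda^{+}_{ab,cd}\neq\emptyset$; fixing $\lambda_1\in\Lambda^{+}_{ab,cd}$ and taking $\gamma=e^{-\mathrm{i}\tau\lambda_1}$, and then passing from chords of the unit circle to arcs, Pair-LPGST becomes equivalent to: for every $\epsilon>0$ there is a real $\tau$ with $|\tau(\lambda_i-\lambda_1)|<\epsilon\pmod{2\pi}$ for all $\lambda_i\in\Lambda^{+}_{ab,cd}$ and $|\tau(\mu_j-\lambda_1)-\pi|<\epsilon\pmod{2\pi}$ for all $\mu_j\in\Lambda^{-}_{ab,cd}$.

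Finally I would apply Lemma~\ref{Solution} to this system, with the numbers $\lambda_i-\lambda_1$ and $\mu_j-\lambda_1$ in the role of the $\theta_k$ and $0$ and $\pi$ in the role of the $\zeta_k$: it is solvable for all $\epsilon>0$ if and only if every integer relation $\sum_i a_i(\lambda_i-\lambda_1)+\sum_j b_j(\mu_j-\lambda_1)=0$ implies that $\sum_j b_j$ is even. A short substitution identifies this with condition (2): if $\{l_i\},\{m_j\}$ satisfy the two equations of (2), then subtracting $\bigl(\sum_i l_i+\sum_j m_j\bigr)\lambda_1=0$ from $\sum_i l_i\lambda_i+\sum_j m_j\mu_j=0$ produces such a relation with $(a_i,b_j)=(l_i,m_j)$; conversely, given such a relation, the choice $l_i=a_i$, $m_j=b_j$ and $l_1=-\sum_{\lambda_i\neq\lambda_1}a_i-\sum_j b_j$ satisfies both equations of (2), and $\sum_j m_j=\sum_j b_j$. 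Stringing the three reductions together gives the theorem. I expect the main obstacle to be the necessity of strong cospectrality — carrying out the compactness/limit argument carefully — together with making the chord-to-arc comparison quantitatively reversible; the Kronecker step and the closing substitution are routine bookkeeping.
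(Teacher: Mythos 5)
Your proposal is correct, and its backbone --- reduce Pair-LPGST to a simultaneous phase-alignment problem and invoke Kronecker's theorem (Lemma~\ref{Solution}) with targets $0$ on $\Lambda^{+}_{ab,cd}$ and $\pi$ on $\Lambda^{-}_{ab,cd}$, relative to a base eigenvalue --- is exactly the paper's. The differences are in execution, and in each case your version is the more careful one. For the necessity of strong cospectrality the paper applies $F_{\theta_k}$ to the error vector, uses $\|F_{\theta_k}\|=1$ to get $\bigl|\,\|F_{\theta_k}(\mathbf e_a-\mathbf e_b)\|-\|F_{\theta_k}(\mathbf e_c-\mathbf e_d)\|\,\bigr|<\epsilon$, and concludes the $\pm$ relation from equality of norms (which needs the projectors to be rank one); your compactness argument produces the full vector identity $e^{\mathrm{i}\psi_\theta}F_\theta(\mathbf e_a-\mathbf e_b)=\gamma F_\theta(\mathbf e_c-\mathbf e_d)$ and so survives repeated eigenvalues. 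For the forward implication of condition (2) the paper argues heuristically that the fidelity can only approach $1$ if the phases ``line up''; your two-sided bound $\sqrt{c}\cdot\max\le\|\text{error}\|\le\sqrt{2}\cdot\max$ via orthogonality of the $v_\theta$ makes that equivalence precise in both directions. You also handle two points the paper passes over silently: the reduction to $\gamma=e^{-\mathrm{i}\tau\lambda_1}$ (justified since $|e^{-\mathrm{i}\tau\lambda_1}-\gamma|$ is itself small), and the case $\Lambda^{+}_{ab,cd}=\emptyset$ via the $c\leftrightarrow d$ swap, under which condition (2) is indeed invariant because $\sum_i l_i=-\sum_j m_j$. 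Your closing substitution matching integer relations among $\{\lambda_i-\lambda_1,\mu_j-\lambda_1\}$ with the two equations of condition (2) is the same bookkeeping the paper performs. In short: same theorem, same key lemma, same reduction, with the two soft spots of the published proof patched.
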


\begin{proof}
We first prove that if $G$ has Pair-LPGST from $\{a,b\}$ to $\{c,d\}$, then Condition (1) holds, that is, Condition (1) is the necessary condition for $G$ to have Pair-LPGST from $\{a,b\}$ to $\{c,d\}$.

Let $\theta_{1},\theta_{2}, \ldots,\theta_{n}$ be the eigenvalues of $L_{G}$ with the corresponding orthonormal eigenvectors $\mathbf{x}_{1}, \mathbf{x}_{2}, \ldots, \mathbf{x}_{n}$.
By (\ref{H_ADec2}), we have
$$
U_{L_G}(t)=\sum_{r=1}^{n}e^{-\mathrm{i}t\theta_{r}}F_{\theta_{r}}.
$$
Note that $G$ has Pair-LPGST from $\{a,b\}$ to $\{c,d\}$. Then for any $\epsilon>0$, there exist a time $\tau$ and a complex scalar $\gamma$ with $|\gamma|=1$ such that
$$
\left|\sum_{r}e^{-\mathrm{i}\tau\theta_{r}}F_{\theta_{r}}
(\mathbf{e}_{a}-\mathbf{e}_{b}) -\gamma(\mathbf{e}_{c}-\mathbf{e}_{d})\right|<\epsilon.
$$
For $\theta_k$, $k=1, 2,\ldots, n$, define
$$
\left|F_{\theta_k}\right|= \max_{|\mathbf{x}|=1} \left|F_{\theta_k}\mathbf{x}\right|.
$$
By \cite[Theorem 5.6.2 (b)]{HornJ2013}, for any vector $\mathbf{x}$, we have
$$
\left|F_{\theta_k}\mathbf{x}\right|\le \left|F_{\theta_k}\right| \left|\mathbf{x}\right|.
$$
By \cite[Equation (5.2.7)]{Meyer2001}, we have $\left|F_{\theta_k}\right|=1$. Thus, for any vector $\mathbf{x}$, we have
$$
\left|F_{\theta_k}\mathbf{x}\right|\le \left|\mathbf{x}\right|.
$$
Then
\begin{align*}
\nonumber \left|e^{-\mathrm{i}\tau\theta_{k}}F_{\theta_k}(\mathbf{e}_{a}-\mathbf{e}_{b})-\gamma F_{\theta_k}(\mathbf{e}_{c}-\mathbf{e}_{d})\right|
&=\left|F_{\theta_k}\left(\sum_{r}e^{-\mathrm{i}\tau\theta_{r}}F_{\theta_r}
(\mathbf{e}_{a}-\mathbf{e}_{b}) -\gamma(\mathbf{e}_{c}-\mathbf{e}_{d})\right)\right|\\
\nonumber &\leq
\left|\sum_{r}e^{-\mathrm{i}\tau\theta_{r}}F_{\theta_r}(\mathbf{e}_{a}-\mathbf{e}_{b}) -\gamma(\mathbf{e}_{c}-\mathbf{e}_{d})\right|
\\
&<\epsilon.
\end{align*}
Note that $\left|e^{-\mathrm{i}\tau\theta_{k}}\right|=\left|\gamma\right|=1$. Then
$$
\left|\left|F_{\theta_k}(\mathbf{e}_{a}-\mathbf{e}_{b})\right|-\left| F_{\theta_k}(\mathbf{e}_{c}-\mathbf{e}_{d})\right|\right| \le \left|e^{-\mathrm{i}\tau\theta_{k}}F_{\theta_k}(\mathbf{e}_{a}-\mathbf{e}_{b})-\gamma F_{\theta_k}(\mathbf{e}_{c}-\mathbf{e}_{d})\right|<\epsilon.
$$
Due to the arbitrariness of $\epsilon$,  we conclude that $$F_{\theta_k}(\mathbf{e}_{a}-\mathbf{e}_{b}) = \pm F_{\theta_k}(\mathbf{e}_{c}-\mathbf{e}_{d}).$$
This proves that Condition (1) holds.

In the following, we prove that for two Laplacian strongly cospectral pairs of vertices $\{a,b\}$ and $\{c,d\}$, $G$ has Pair-LPGST from $\{a,b\}$ to $\{c,d\}$ if and only if Condition (2) holds.

Suppose that $G$ has Pair-LPGST from $\{a,b\}$ to $\{c,d\}$. Then, for any $\epsilon>0$, there is a time $\tau$ such that
\begin{align*}
\left|\frac{1}{2}(\mathbf e_{a}-\mathbf e_{b})^\top e^{-\mathrm{i}\tau L_{G}}(\mathbf e_{c}-\mathbf e_{d})\right|^{2}>1-\epsilon.
\end{align*}
Recall that $F_{\theta_k}=\mathbf{x}_k\mathbf{x}_k^H$, $k=1, 2,\ldots, n$. Let $\mathbf{x}_{k}(u)$ denote the element at the $u$-th position of column vector $\mathbf{x}_{k}$. Then
$$
F_{\theta_k}(\mathbf{e}_{a}-\mathbf{e}_{b}) =(\mathbf{x}_{k}(a)-\mathbf{x}_{k}(b))\mathbf{x}_{k},
$$
and
$$
F_{\theta_k}(\mathbf{e}_{c}-\mathbf{e}_{d}) =(\mathbf{x}_{k}(c)-\mathbf{x}_{k}(d))\mathbf{x}_{k}.
$$
Thus
\begin{align}\label{EquaCloseto1-1}
\nonumber&\left|\frac{1}{2}(\mathbf e_{a}-\mathbf e_{b})^\top e^{-\mathrm{i}\tau L_{G}}(\mathbf e_{c}-\mathbf e_{d})\right|^{2}\\
\nonumber&=\left|\frac{1}{2}\sum_{\lambda_{i}\in\Lambda^{+}_{ab,cd}}e^{-\mathrm{i}\tau\lambda_{i}} (\mathbf e_{a}-\mathbf e_{b})^\top F_{\lambda_i}(\mathbf e_{c}-\mathbf e_{d})+\frac{1}{2}\sum_{\mu_{j}\in\Lambda^{-}_{ab,cd}}{e}^{-\mathrm{i}\tau\mu_{j}} (\mathbf e_{a}-\mathbf e_{b})^\top F_{\mu_j}(\mathbf e_{c}-\mathbf e_{d})\right|^{2}\\
\nonumber&=\left|\frac{1}{2}\sum_{\lambda_{i}\in\Lambda^{+}_{ab,cd}}{e}^{-\mathrm{i}\tau\lambda_{i}} (\mathbf e_{a}-\mathbf e_{b})^\top \mathbf{(x}_{i}(a)-\mathbf{x}_{i}(b))\mathbf{x}_{i}- \frac{1}{2}\sum_{\mu_{j}\in\Lambda^{-}_{ab,cd}}{e}^{-\mathrm{i}\tau\mu_{j}} (\mathbf e_{a}-\mathbf e_{b})^\top (\mathbf{x}_{j}(a)-\mathbf{x}_{j}(b))\mathbf{x}_{j}\right|^{2}\\
\nonumber&=\left|\frac{1}{2}\sum_{\lambda_{i}\in\Lambda^{+}_{ab,cd}}{e}^{-\mathrm{i}\tau \lambda_{i}}(\mathbf{x}_{i}(a)-\mathbf{x}_{i}(b))^{2}
-\frac{1}{2}\sum_{\mu_{j}\in\Lambda^{-}_{ab,cd}}{e}^{-\mathrm{i}\tau \mu_{j}}(\mathbf{x}_{j}(a)-\mathbf{x}_{j}(b))^{2}\right|^{2}\\
& >1-\epsilon.
\end{align}
Note that $\sum_{r=1}^n F_{\theta_r}=I$. Then
$$
\sum_{r=1}^n  \mathbf{x}_{r}(a)^{2}=1  \text{~and~} \sum_{r=1}^n  \mathbf{x}_{r}(a)\mathbf{x}_{r}(b)=0.
$$
Thus
$$
 \sum_{r=1}^n (\mathbf{x}_{r}(a)-\mathbf{x}_{r}(b))^{2}=2.
$$
In order to make Inequality (\ref{EquaCloseto1-1}) correct, we need the phases of ${e}^{-\mathrm{i}\tau\lambda_{i}}$ and $-{e}^{-\mathrm{i}\tau\mu_{j}}$ to be close to lining up to point in the same direction. That is, for any $\epsilon>0$, there is a time $\tau$ such that
\begin{align*}
{e}^{-\mathrm{i}\tau\lambda_{i}}={e}^{-\mathrm{i}(\tau\lambda_{0}+\epsilon)} \text{~and~} {e}^{-\mathrm{i}\tau\mu_{j}}=-{e}^{-\mathrm{i}(\tau\lambda_{0}+\epsilon)},
\end{align*}
for all $i$ and $j$. In other words, for any $\epsilon>0$, there is a time $\tau$ such that
\begin{align*}
&|(\lambda_{i}-\lambda_{0})\tau-0|<\epsilon  \pmod{2\pi},\\[0.15cm]
&|(\mu_{j}-\lambda_{0})\tau-\pi|<\epsilon  \pmod{2\pi}.
\end{align*}
Take $\zeta_i$ to be $0$ for the first set of equations, and $\pi$ for the second set. Lemma \ref{Solution} tells us that this has a solution $\tau$ if and only if, whenever there are integers $l_{i}$ and $m_{j}$ such that
$$\sum_{\lambda_{i}\in\Lambda^{+}_{ab,cd}}l_{i}(\lambda_{i}-\lambda_{0}) +\sum_{\mu_{j}\in\Lambda^{-}_{ab,cd}}m_{j}(\mu_{j}-\lambda_{0})=0,$$
we must have
 $$\sum_{j}m_{j}\pi\equiv0\pmod{2\pi},$$
which implies that $\sum\limits_{j}m_{j}$ is even. This gives Condition (2).

For the converse, assume that Condition (2) holds. In order to prove that $G$ has Pair-LPGST from $\{a,b\}$ to $\{c,d\}$, we need to prove that for any $\epsilon>0$, there exist a time $\tau$ and a complex scalar $\gamma$ with $|\gamma|=1$ such that
$$
\left|\sum_{r}e^{-\mathrm{i}\tau\theta_{r}}F_{\theta_{r}}
(\mathbf{e}_{a}-\mathbf{e}_{b}) -\gamma(\mathbf{e}_{c}-\mathbf{e}_{d})\right|<\epsilon,
$$
that is,
\begin{align}\label{TauEps1-21}
\left|e^{-\mathrm{i}\tau\theta_{k}}F_{\theta_k}(\mathbf{e}_{a}-\mathbf{e}_{b})-\gamma F_{\theta_k}(\mathbf{e}_{c}-\mathbf{e}_{d})\right| &<\epsilon, \text{~~for~} k=1, 2,\ldots, n.
\end{align}
Recall that
$$
F_{\theta_k}(\mathbf{e}_{a}-\mathbf{e}_{b}) = \pm F_{\theta_k}(\mathbf{e}_{c}-\mathbf{e}_{d}), \text{~~for~} k=1, 2,\ldots, n,
$$
and
$$
F_{\theta_k}(\mathbf{e}_{a}-\mathbf{e}_{b}) =(\mathbf{x}_{k}(a)-\mathbf{x}_{k}(b))\mathbf{x}_{k}, \text{~~for~} k=1, 2,\ldots, n.
$$
Plugging them into (\ref{TauEps1-21}), we have
\begin{align*}
\left|e^{-\mathrm{i}\tau\theta_{k}} \mp\gamma \right||\mathbf{x}_{k}(a)-\mathbf{x}_{k}(b)| &<\epsilon, \text{~~for~} k=1, 2,\ldots, n,
\end{align*}
that is,
\begin{align}\label{TauEps1-21-2}
\left|e^{-\mathrm{i}\tau\theta_{k}} \mp\gamma \right|&<\epsilon, \text{~~for~} \theta_{k}\in\mathrm{{supp}}_{L_G}(\mathbf{e}_{a}-\mathbf{e}_{b}).
\end{align}
Here, keep in mind that $|\mathbf{x}_{k}(a)-\mathbf{x}_{k}(b)|=0$ if $\theta_{k}\notin\mathrm{{supp}}_{L_G}(\mathbf{e}_{a}-\mathbf{e}_{b})$.

Set $\gamma=e^{-\mathrm{i}\delta}$. Then (\ref{TauEps1-21-2}) has a solution if and only if
\begin{align} \label{approx}
\delta\approx\theta_{k}\tau-q_{k}\pi,
\end{align}
 where $q_{r}\in\mathbb{Z}$ is even if $\theta_{k}\in\Lambda^{+}_{ab,cd}$, and odd if $\theta_{k}\in\Lambda^{-}_{ab,cd}$. Clearly, a solution to (\ref{approx}) is equivalent to a solution as described in Lemma \ref{Solution} with
 $$t=\tau ~~\text{and}~~\zeta_{k}=\delta+\sigma_{k}\pi,$$
 where $\sigma_{k}=0$ if $\theta_{k}\in\Lambda^{+}_{ab,cd}$, and $\sigma_{k}=1$ if $\theta_{k}\in\Lambda^{-}_{ab,cd}$.

Assume that whenever there are integers $\{l_{i}\}$ and $\{m_{j}\}$ such that
\begin{align*}
\sum_{\lambda_{i}\in\Lambda^{+}_{ab,cd}}l_{i}\lambda_{i} +\sum_{\mu_{j}\in\Lambda^{-}_{ab,cd}}m_{j}\mu_{j}&=0,\\
\sum_{i}l_{i}+\sum_{j}m_{j}&=0,
\end{align*}
we have that $\sum\limits_{j}m_{j}$ is even. Then
 \begin{align*}
 \delta\left(\sum_{i}l_{i}+\sum_{j}m_{j}\right)
 +\sum_{j}m_{j}\pi\equiv0 \pmod {2\pi},
 \end{align*}
which is equivalent to
 $$
 \sum_{i}l_{i}\delta+\sum_{j}m_{j}(\delta+\pi)\equiv0 \pmod {2\pi}.
 $$
By Lemma \ref{Solution}, if Condition (2) holds, then (\ref{TauEps1-21-2}) has a solution, that is, (\ref{TauEps1-21}) is satisfied. Thus, $G$ has Pair-LPGST from $\{a,b\}$ to $\{c,d\}$.

This completes the proof. \qed
\end{proof}

\section{Characterization of Edge-LPGST in paths}
\label{Sec:main3}

In this section, we give a complete characterization of Edge-LPGST in paths. Recall that $P_n$ denotes a path with $n$ vertices, whose vertices from one terminal vertex to the other terminal vertex are labelled by the positive integers from $1$ to $n$ such that vertices with successive labels are adjacent.


\begin{lemma}\emph{(See \cite[Lemma 5.4.4]{QCh18})}
\label{Ei-ValueVector-1}
The Laplacian eigenvalues of $P_{n}$ are $\theta_k=2-2\cos\frac{k\pi}{n}$ with the corresponding eigenvector
$$
\mathbf{x}_k=2\sin\frac{k\pi}{2n}\left(\begin{array}{c}
                                                                 \cos\frac{k\pi}{2n}\\[0.2cm]
                                                                 \cos\frac{3k\pi}{2n}\\[0.2cm]
                                                                 \cos\frac{5k\pi}{2n}\\[0.2cm]
                                                                 \vdots\\[0.2cm]
                                                                 \cos\frac{(2n-3)k\pi}{2n}\\[0.2cm]
                                                                 \cos\frac{(2n-1)k\pi}{2n}\\
                                                               \end{array}
\right),
$$
for $k=0,1,2,\ldots,n-1$.
\end{lemma}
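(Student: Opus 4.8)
The plan is a direct verification: exhibit the claimed pairs $(\theta_k,\mathbf{x}_k)$ as genuine eigenpairs of $L_{P_n}$, and then use a dimension count to conclude that they give the whole spectrum. First I would write $L_{P_n}$ explicitly as the $n\times n$ tridiagonal matrix with all sub- and super-diagonal entries $-1$, with diagonal entry $2$ at the interior vertices $2,\ldots,n-1$ and diagonal entry $1$ at the two endpoints $1$ and $n$ (their degree). Since the overall scalar $2\sin\frac{k\pi}{2n}$ is irrelevant to the eigenvector property, I would work with the unnormalised vector $v=v^{(\omega)}$ whose $j$-th entry is $\cos\frac{(2j-1)\omega}{2}$, depending on a real parameter $\omega$; the key structural point is that the ``staggered'' half-integer arguments $\frac{(2j-1)\omega}{2}$ are exactly what makes the left-hand boundary condition automatic.

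For the interior rows $2\le j\le n-1$ the eigenequation reads $-v_{j-1}+2v_j-v_{j+1}=\theta v_j$; writing $v_{j\pm1}=\cos\frac{(2j-1)\omega\pm 2\omega}{2}$ and using $\cos(\alpha-\beta)+\cos(\alpha+\beta)=2\cos\alpha\cos\beta$, the left-hand side collapses to $(2-2\cos\omega)v_j$, so every interior row holds for arbitrary $\omega$ once we set $\theta=2-2\cos\omega$. For the first row the equation is $v_1-v_2=\theta v_1$; expanding via sum-to-product gives $\cos\frac{\omega}{2}-\cos\frac{3\omega}{2}=2\sin\omega\sin\frac{\omega}{2}$ on the left and $(2-2\cos\omega)\cos\frac{\omega}{2}=4\sin^2\frac{\omega}{2}\cos\frac{\omega}{2}=2\sin\frac{\omega}{2}\sin\omega$ on the right, so the two sides agree identically and the first row imposes no constraint at all. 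The single genuine condition comes from the last row, $-v_{n-1}+v_n=\theta v_n$: applying $\cos A-\cos B=-2\sin\frac{A+B}{2}\sin\frac{A-B}{2}$ and $2\sin A\cos B=\sin(A+B)+\sin(A-B)$, this reduces cleanly to $\sin(n\omega)=0$, i.e. $\omega=\frac{k\pi}{n}$ for some integer $k$. This yields precisely $\theta_k=2-2\cos\frac{k\pi}{n}$ with eigenvector entries $\cos\frac{(2j-1)k\pi}{2n}$.

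It then remains to range over admissible $k$. For $k=0,1,\ldots,n-1$ the numbers $\cos\frac{k\pi}{n}$ are pairwise distinct (cosine is strictly decreasing on $[0,\pi]$), so the $\theta_k$ are $n$ distinct reals; since $L_{P_n}$ is a real symmetric $n\times n$ matrix it has exactly $n$ eigenvalues counted with multiplicity, hence these are all of them, each simple. The value $k=n$ would give $\theta=4$ but a zero vector $v^{(\pi)}$, and $k$ and $2n-k$ give the same eigenvalue, which is why the list stops at $n-1$. As a last bookkeeping step I would record that the stated normalisation is just the product-to-sum rewriting $2\sin\frac{k\pi}{2n}\cos\frac{(2j-1)k\pi}{2n}=\sin\frac{jk\pi}{n}-\sin\frac{(j-1)k\pi}{n}$, and (if an orthonormal scaling is wanted elsewhere) that $\sum_{j=1}^{n}\cos^2\frac{(2j-1)k\pi}{2n}=\frac{n}{2}$ for $k\neq0$ by summing a cosine arithmetic progression. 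The only step carrying any subtlety is the observation that the staggered argument $\frac{(2j-1)\omega}{2}$ trivialises the $j=1$ boundary equation, so that the entire spectrum is pinned down by the single equation $\sin(n\omega)=0$; everything else is routine trigonometric identity manipulation and a counting argument.
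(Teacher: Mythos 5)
Your verification is correct, and in fact it supplies more than the paper does: the paper states this lemma with a bare citation to \cite[Lemma 5.4.4]{QCh18} and gives no proof at all, so there is no in-paper argument to compare against. Your route --- check the interior three-term recurrence via the product-to-sum identity, observe that the staggered half-integer phase $\tfrac{(2j-1)\omega}{2}$ makes the $j=1$ boundary row hold identically, extract the quantisation condition from the last row, and then count $n$ distinct values of $\cos\tfrac{k\pi}{n}$ against the $n$ eigenvalues of a symmetric matrix --- is the standard and essentially only sensible direct proof, and every identity you invoke checks out. Two cosmetic points worth recording if this were written up in full: (i) the last-row condition is literally $\sin\tfrac{\omega}{2}\,\sin(n\omega)=0$ rather than $\sin(n\omega)=0$, though the extra factor only contributes $\omega=0$, which is already covered by $k=0$; (ii) for $k=0$ the normalising prefactor $2\sin\tfrac{k\pi}{2n}$ in the lemma's statement vanishes, so the displayed $\mathbf{x}_0$ is the zero vector --- a defect of the quoted statement itself (the eigenvalue $\theta_0=0$ of course has the all-ones eigenvector), not of your argument, and it is harmless downstream since the paper only ever uses $F_{\theta_k}$ for $\theta_k$ in the eigenvalue support, which excludes $\theta_0$.
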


\begin{theorem}\label{FequalS-11}
Let $P_{n}$, $\theta_k$ and $\mathbf{x}_k$ be as in Lemma \ref{Ei-ValueVector-1}, and let $1\leq a \leq n-1$ and $1\leq b \leq n-1$ be two integers. Then $\mathbf{e}_{a}-\mathbf{e}_{a+1}$ and $\mathbf{e}_{n-b}-\mathbf{e}_{n-b+1}$ are Laplacian strongly cospectral if and only if $a=b$. Moreover,
\begin{align*}
  F_{\theta_k}(\mathbf{e}_{a}-\mathbf{e}_{a+1}) &=F_{\theta_k}(\mathbf{e}_{n-a}-\mathbf{e}_{n-a+1})=\mathbf{0}  \text{~~if~and~only~if~$n| ak$},\\[0.15cm]
 F_{\theta_k}(\mathbf{e}_{a}-\mathbf{e}_{a+1}) &=F_{\theta_k}(\mathbf{e}_{n-a}-\mathbf{e}_{n-a+1}) \not=\mathbf{0} \text{~~if~and~only~if~$k$~is~odd~and~$n\nmid ak$},\\[0.15cm]
 F_{\theta_k}(\mathbf{e}_{a}-\mathbf{e}_{a+1}) &=-F_{\theta_k}(\mathbf{e}_{n-a}-\mathbf{e}_{n-a+1}) \not=\mathbf{0} \text{~~if~and~only~if~$k$~is~even~and~$n\nmid ak$}.
\end{align*}
That is,
$$
\mathrm{{supp}}_{L_G}(\mathbf{e}_{a}-\mathbf{e}_{a+1}) =\mathrm{{supp}}_{L_G}(\mathbf{e}_{n-a}-\mathbf{e}_{n-a+1}) =\left\{\theta_k:n\nmid ak\right\}.
$$
\end{theorem}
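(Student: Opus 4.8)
The plan is to push everything down to a single explicit scalar. Since the numbers $\theta_k=2-2\cos\frac{k\pi}{n}$ for $0\le k\le n-1$ are pairwise distinct, every Laplacian eigenspace of $P_n$ is one-dimensional, so $F_{\theta_k}$ is the orthogonal projection onto $\mathbb{R}\mathbf{x}_k$ and $F_{\theta_k}(\mathbf{e}_a-\mathbf{e}_{a+1})$ is a fixed scalar multiple of $\mathbf{x}_k$, the scalar being a positive multiple of $\mathbf{x}_k(a)-\mathbf{x}_k(a+1)$; the normalization of $\mathbf{x}_k$ plays no role in any of the comparisons below. The index $k=0$ is a harmless special case: $F_{\theta_0}$ projects onto the all-ones vector, which is orthogonal to every edge state, so both $F_{\theta_0}(\mathbf{e}_a-\mathbf{e}_{a+1})$ and $F_{\theta_0}(\mathbf{e}_{n-a}-\mathbf{e}_{n-a+1})$ vanish, matching the case $n\mid a\cdot 0$.

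The key computation, which does essentially all the work, is that Lemma~\ref{Ei-ValueVector-1} together with the identity $\cos X-\cos Y=-2\sin\frac{X+Y}{2}\sin\frac{X-Y}{2}$ gives
$$\mathbf{x}_k(a)-\mathbf{x}_k(a+1)=4\sin^2\frac{k\pi}{2n}\,\sin\frac{ak\pi}{n}.$$
For $1\le k\le n-1$ the factor $\sin^2\frac{k\pi}{2n}$ is strictly positive, so this scalar vanishes exactly when $\sin\frac{ak\pi}{n}=0$, that is, exactly when $n\mid ak$; this yields the support formula and the first of the three displayed equivalences. Replacing $a$ by $n-a$ and using $\sin\!\big(k\pi-\frac{ak\pi}{n}\big)=(-1)^{k+1}\sin\frac{ak\pi}{n}$ gives $\mathbf{x}_k(n-a)-\mathbf{x}_k(n-a+1)=(-1)^{k+1}\big(\mathbf{x}_k(a)-\mathbf{x}_k(a+1)\big)$, hence $F_{\theta_k}(\mathbf{e}_{n-a}-\mathbf{e}_{n-a+1})=(-1)^{k+1}F_{\theta_k}(\mathbf{e}_a-\mathbf{e}_{a+1})$; conceptually, this is the statement that the reflection $v\mapsto n+1-v$ of $P_n$ commutes with each $F_{\theta_k}$ and acts on $\mathbf{x}_k$ by the sign $(-1)^k$. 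Splitting on the parity of $k$ now produces the remaining two displayed equivalences, and in particular shows that $\mathbf{e}_a-\mathbf{e}_{a+1}$ and $\mathbf{e}_{n-a}-\mathbf{e}_{n-a+1}$ are Laplacian strongly cospectral, which is the sufficiency half of the first assertion.

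For the converse, suppose $\mathbf{e}_a-\mathbf{e}_{a+1}$ and $\mathbf{e}_{n-b}-\mathbf{e}_{n-b+1}$ are strongly cospectral. Comparing the coefficients of $\mathbf{x}_k$ via the displayed identities forces $\sin\frac{ak\pi}{n}=\pm\sin\frac{bk\pi}{n}$ for every $k$. Taking $k=1$, both sines lie in $(0,1]$ because $1\le a,b\le n-1$, so the sign is $+$; thus $\sin\frac{a\pi}{n}=\sin\frac{b\pi}{n}$ with $\frac{a\pi}{n},\frac{b\pi}{n}\in(0,\pi)$, whence $a=b$ or $a+b=n$. The one point I would treat with care, and the only genuine subtlety in the argument, is the branch $a+b=n$: there $\{n-b,n-b+1\}=\{a,a+1\}$, so the two edge states literally coincide and strong cospectrality holds trivially; this degenerate coincidence is excluded in the intended reading of the statement (and in its use for Edge-LPGST between two genuinely distinct edges), leaving $a=b$. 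Assembling the three displayed cases and the support formula from the identities above then finishes the proof, the rest being routine bookkeeping.
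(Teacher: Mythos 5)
Your proof is correct and follows essentially the same route as the paper: the product-to-sum identity yields the scalar $4\sin^{2}\frac{k\pi}{2n}\sin\frac{ak\pi}{n}$, the reflection $a\mapsto n-a$ introduces the sign $(-1)^{k+1}$, and the case $k=1$ forces $a=b$ in the converse. You are in fact more careful than the paper at the one delicate point: the paper's proof passes from $\sin\frac{a\pi}{n}=\pm\sin\frac{b\pi}{n}$ directly to $a=b$, silently ignoring the branch $a+b=n$ (where $\{n-b,n-b+1\}=\{a,a+1\}$, the two edge states coincide, and strong cospectrality holds trivially), which you correctly identify and dispose of as a degenerate case excluded by the intended reading of the statement.
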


\begin{proof}
Recall that
$$
F_{\theta_k}(\mathbf{e}_{a}-\mathbf{e}_{a+1}) =(\mathbf{x}_{k}(a)-\mathbf{x}_{k}(a+1))\mathbf{x}_{k},
$$
and
$$
F_{\theta_k}(\mathbf{e}_{n-b}-\mathbf{e}_{n-b+1}) =(\mathbf{x}_{k}(n-b)-\mathbf{x}_{k}(n-b+1))\mathbf{x}_{k},
$$
where $\mathbf{x}_{k}(u)$ denote the element at the $u$-th position of column vector $\mathbf{x}_{k}$. By Lemma \ref{Ei-ValueVector-1}, we have
\begin{align*}
F_{\theta_k}(\mathbf{e}_{a}-\mathbf{e}_{a+1}) &=2\sin\frac{k\pi}{2n} \left(\cos\frac{(2a-1)k\pi}{2n}-\cos\frac{(2a+1)k\pi}{2n}\right)\mathbf{x}_{k}\\
&=4\left(\sin\frac{k\pi}{2n}\right)^2\left(\sin\frac{ak\pi}{n}\right)\mathbf{x}_{k},
\end{align*}
and
\begin{align*}
F_{\theta_k}(\mathbf{e}_{n-b}-\mathbf{e}_{n-b+1}) &=2\sin\frac{k\pi}{2n}\left(\cos\left(k\pi-\frac{(2b+1)k\pi}{2n}\right) -\cos\left(k\pi-\frac{(2b-1)k\pi}{2n}\right) \right)\mathbf{x}_{k}\\
&=4\left(\sin\frac{k\pi}{2n}\right)^2\sin\left(k\pi-\frac{bk\pi}{n}\right)\mathbf{x}_{k}\\
&=(-1)^{k+1}4\left(\sin\frac{k\pi}{2n}\right)^2\left(\sin\frac{bk\pi}{n}\right)\mathbf{x}_{k}.
\end{align*}
Recall that $1\leq a \leq n-1$ and $1\leq b \leq n-1$. Let $k=1$. Then
$$\sin\frac{\pi}{2n}\not=0, ~~ \sin\frac{a\pi}{n}\not=0, \text{~~and~~}\sin\frac{b\pi}{n}\not=0.$$
Thus, in order to make
$$
F_{\theta_1}(\mathbf{e}_{a}-\mathbf{e}_{a+1}) =\pm F_{\theta_1}(\mathbf{e}_{n-b}-\mathbf{e}_{n-b+1}),
$$
we must have $a=b$.

Conversely, if $a=b$, then one can easily verify that $\mathbf{e}_{a}-\mathbf{e}_{a+1}$ and $\mathbf{e}_{n-b}-\mathbf{e}_{n-b+1}$ are Laplacian strongly cospectral. Moreover,
\begin{align*}
  F_{\theta_k}(\mathbf{e}_{a}-\mathbf{e}_{a+1}) &=F_{\theta_k}(\mathbf{e}_{n-a}-\mathbf{e}_{n-a+1})=\mathbf{0}  \text{~~if~and~only~if~$n| ak$},\\[0.15cm]
 F_{\theta_k}(\mathbf{e}_{a}-\mathbf{e}_{a+1}) &=F_{\theta_k}(\mathbf{e}_{n-a}-\mathbf{e}_{n-a+1}) \not=\mathbf{0} \text{~~if~and~only~if~$k$~is~odd~and~$n\nmid ak$},\\[0.15cm]
 F_{\theta_k}(\mathbf{e}_{a}-\mathbf{e}_{a+1}) &=-F_{\theta_k}(\mathbf{e}_{n-a}-\mathbf{e}_{n-a+1}) \not=\mathbf{0} \text{~~if~and~only~if~$k$~is~even~and~$n\nmid ak$}.
\end{align*}
That is,
$$
\mathrm{{supp}}_{L_G}(\mathbf{e}_{a}-\mathbf{e}_{a+1}) =\mathrm{{supp}}_{L_G}(\mathbf{e}_{n-a}-\mathbf{e}_{n-a+1}) =\left\{\theta_k:n\nmid ak\right\}.
$$

This completes the proof. \qed

\end{proof}

\begin{theorem}\label{2nlpgst}
If $n$ is a power of $2$ or an odd prime number, then $P_{n}$ has Edge-LPGST from $\{a,a+1\}$ to $\{n-a,n-a+1\}$, where $1\leq a\leq n-1$ is an integer.
\end{theorem}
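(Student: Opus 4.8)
The plan is to apply Theorem~\ref{lp} to the pairs $\{a,a+1\}$ and $\{n-a,n-a+1\}$ in $P_n$. Condition~(1) of that theorem is exactly the content of Theorem~\ref{FequalS-11} (with $b=a$), and that theorem moreover identifies the sign classes: the set $\Lambda^{+}$ consists of the $\theta_k$ with $k$ odd and $n\nmid ak$, and $\Lambda^{-}$ of the $\theta_k$ with $k$ even and $n\nmid ak$. So everything reduces to verifying Condition~(2): one must show that for any integers $\{c_k\}$ indexed by $\{k:1\le k\le n-1,\ n\nmid ak\}$ with $\sum_k c_k\theta_k=0$ and $\sum_k c_k=0$, the partial sum $\sum_{k\ \mathrm{even}}c_k$ is even; since the coefficients sum to zero this is the same as showing $\sum_{k\ \mathrm{odd}}c_k$ is even. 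I would prove this for the two families $n=2^m$ and $n=p$ (an odd prime) by different methods.

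For $n=2^{m}$ I would pass to the cyclotomic ring $\mathbb Z[\zeta]$ with $\zeta=\exp(\pi\mathrm{i}/n)$ a primitive $2^{m+1}$-th root of unity, writing $\theta_k=2-2\cos(k\pi/n)=(1-\zeta^{k})(1-\zeta^{-k})$. Let $\mathfrak p=(1-\zeta)$ be the unique prime above $2$: it is totally ramified, its residue field is $\mathbb F_2$, and $v_{\mathfrak p}(2)=2^{m}$. Running through the ramification of the tower $\mathbb Q\subset\mathbb Q(\zeta_4)\subset\cdots\subset\mathbb Q(\zeta_{2^{m+1}})$ gives the valuation formula $v_{\mathfrak p}(\theta_k)=2^{\,v_2(k)+1}$, so $\theta_1$ attains the strictly minimal value $2$. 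Hence every $\theta_k/\theta_1$ is $\mathfrak p$-integral; it lies in $\mathfrak p$ when $k$ is even, whereas for $k$ odd it equals $(1+\zeta+\cdots+\zeta^{k-1})(1+\zeta^{-1}+\cdots+\zeta^{-(k-1)})$, which is $\equiv k^{2}\equiv 1\pmod{\mathfrak p}$. Dividing a relation $\sum_k c_k\theta_k=0$ by $\theta_1$ and reducing modulo $\mathfrak p$ then annihilates every even-index term and leaves $\sum_{k\ \mathrm{odd}}c_k\equiv 0\pmod 2$, establishing Condition~(2) in this case.

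For $n=p$ an odd prime, the hypothesis $1\le a\le p-1$ makes $p\nmid ak$ for all $k\in\{1,\dots,p-1\}$, so the eigenvalue support is all of $\{\theta_1,\dots,\theta_{p-1}\}$, with $\Lambda^{+}$ the odd-index and $\Lambda^{-}$ the even-index eigenvalues. Using $\sum_k c_k=0$, a relation $\sum_{k=1}^{p-1}c_k\theta_k=0$ is equivalent to $\sum_{k=1}^{p-1}c_k\cos(k\pi/p)=0$; applying $\cos((p-k)\pi/p)=-\cos(k\pi/p)$ rewrites it as $\sum_{k=1}^{(p-1)/2}(c_k-c_{p-k})\cos(k\pi/p)=0$. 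Since the $(p-1)/2$ numbers $\cos(\pi/p),\cos(2\pi/p),\dots,\cos((p-1)\pi/(2p))$ are $\mathbb Q$-linearly independent — they form a $\mathbb Q$-basis of the real cyclotomic field $\mathbb Q(\zeta_{2p})^{+}$, which has degree $(p-1)/2$ — this forces $c_k=c_{p-k}$ for $1\le k\le(p-1)/2$. Because exactly one of $k$ and $p-k$ is even for each such $k$, we get $\sum_{k\ \mathrm{even}}c_k=\sum_{k=1}^{(p-1)/2}c_k=\frac12\sum_{k=1}^{p-1}c_k=0$, which is even. Hence Condition~(2) holds, and Theorem~\ref{lp} yields that $P_n$ has Edge-LPGST from $\{a,a+1\}$ to $\{n-a,n-a+1\}$.

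The genuine work, in both cases, is the analysis of integer linear relations among the Laplacian eigenvalues $\theta_k=2-2\cos(k\pi/n)$. For $n=2^m$ the crux is the $\mathfrak p$-adic valuation identity $v_{\mathfrak p}(\theta_k)=2^{v_2(k)+1}$ together with the congruence $\theta_k/\theta_1\equiv 1\pmod{\mathfrak p}$ for odd $k$, both of which require a careful run through the ramification theory of the $2$-power cyclotomic tower. For $n=p$ the delicate input is the $\mathbb Q$-linear independence of the half-period cosines $\cos(j\pi/p)$ for $1\le j\le(p-1)/2$; I would obtain it from the classification of vanishing sums of $2p$-th roots of unity (which, $2p$ having only two prime factors, are all the ``obvious'' ones), or cite it directly. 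Once these number-theoretic facts are in place, the remainder — invoking Theorems~\ref{lp} and \ref{FequalS-11} and keeping track of parities — is routine.
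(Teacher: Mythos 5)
Your proposal is correct, but it reaches Condition (2) of Theorem \ref{lp} by a genuinely different route from the paper. The paper encodes a relation $\sum_k l_k\theta_k=0$, $\sum_k l_k=0$ in the polynomial $L(x)=\sum_{k=1}^{n-1}l_kx^k+\sum_{k=n+1}^{2n-1}l_{2n-k}x^k$, observes that $\zeta_{2n}$ is a root, and in both cases performs an explicit long division of $L(x)$ by the minimal polynomial $\Phi_{2n}(x)$ (namely $1+x^n$ for $n=2^m$ and $1-x+\cdots+x^{n-1}$ for $n$ an odd prime) to extract the symmetry $l_k=l_{n-k}$; the required parity of $\sum_k\sigma_kl_k$ then follows from this symmetry together with $\sum_kl_k=0$. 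Your odd-prime case lands on the same symmetry $c_k=c_{p-k}$, but deduces it from the $\mathbb{Q}$-linear independence of $\cos(k\pi/p)$, $1\le k\le (p-1)/2$ (i.e.\ from $[\mathbb{Q}(\zeta_{2p})^{+}:\mathbb{Q}]=(p-1)/2$) rather than from the division --- the same information packaged more conceptually, and your parity bookkeeping at the end is the same as the paper's. Your power-of-two case is where the methods truly diverge: the valuation formula $v_{\mathfrak p}(\theta_k)=2^{\,v_2(k)+1}$ at the totally ramified prime $\mathfrak p=(1-\zeta)$, together with $\theta_k/\theta_1\equiv 1\pmod{\mathfrak p}$ for odd $k$ and $\theta_k/\theta_1\in\mathfrak p$ for even $k$, kills the even-index terms and yields $\sum_{k\ \mathrm{odd}}c_k\in\mathfrak p\cap\mathbb{Z}=2\mathbb{Z}$ directly, with no need to first establish $l_k=l_{n-k}$. (In the paper's version of this case one still has to notice that the unpaired middle coefficient $l_{n/2}$ is even because $\sum_kl_k=0$; your argument sidesteps that entirely.) What each approach buys: the paper's is elementary and uniform across the two cases at the cost of two explicit quotient-and-remainder computations, while your valuation argument imports standard ramification theory of the $2$-power cyclotomic tower and is the technique that generalizes in the pretty-good-state-transfer literature on paths. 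Your use of Theorem \ref{FequalS-11} to verify strong cospectrality and to identify $\Lambda^{+}_{ab,cd}$ and $\Lambda^{-}_{ab,cd}$ with the odd- and even-index eigenvalues in the support matches the paper exactly.
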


\begin{proof}
Let $\zeta_{2n}=e^{\pi \mathrm{i}/n}$. Recall Lemma \ref{Ei-ValueVector-1} that the Laplacian eigenvalues of $P_{n}$ are
$$
\theta_{k}=2-2\cos\frac{\pi k}{n}=2-\left(\zeta_{2n}^{k}+\zeta_{2n}^{2n-k}\right),~~k=0,1,2,\ldots,n-1.
$$
By Theorem \ref{FequalS-11},
$$
S:=\mathrm{{supp}}_{L_G}(\mathbf{e}_{a}-\mathbf{e}_{a+1}) =\mathrm{{supp}}_{L_G}(\mathbf{e}_{n-a}-\mathbf{e}_{n-a+1}) =\{\theta_k:n\nmid ak\},
$$
and
\begin{align}\label{InSupp-11}
&\theta_k \notin S \text{~if~and~only~if~}\theta_{n-k} \notin S, \text{~~for~}k=1,2,\ldots,n-1.
\end{align}
Let
\begin{align}\label{Def-sigma-11}
\sigma_{k}=\left\{\begin{array}{ll}
                         1, & \text{if~} F_{\theta_k}(\mathbf{e}_{a}-\mathbf{e}_{a+1}) =-F_{\theta_k}(\mathbf{e}_{n-a}-\mathbf{e}_{n-a+1})\not=0\\[0.25cm]
                         0, & \text{otherwise}.
                       \end{array}\right.
\end{align}
Note that $\theta_0\notin S$. In order to prove that $P_{n}$ has Edge-LPGST from $\{a,a+1\}$ to $\{n-a,n-a+1\}$, by Theorem \ref{lp}, we verify a more general condition, that is, whenever there are integers $l_{1},l_{2},\ldots,l_{n-1}$ such that
 \begin{align}
\label{Po2Prime--eq1} \sum_{k=1}^{n-1}l_{k}\theta_{k}& =\sum_{k=1}^{n-1}l_{k}\left(2-\left(\zeta_{2n}^{k}+\zeta_{2n}^{2n-k}\right)\right)=0, \\
\label{Po2Prime--eq2} \sum_{k=1}^{n-1}l_{k}&=0.
\end{align}
we must have that $\sum\limits_{i=1}^{n-1} \sigma_{i} l_{i}$ is even.

Assume that there are integers $l_{1},l_{2},\ldots,l_{n-1}$ such that (\ref{Po2Prime--eq1}) and (\ref{Po2Prime--eq2}) are satisfied. Define
$$
L(x)=\sum_{k=1}^{n-1}l_{k}x^{k}+\sum_{k=n+1}^{2n-1}l_{2n-k}x^{k}.
$$
By (\ref{Po2Prime--eq1}) and (\ref{Po2Prime--eq2}), $\zeta_{2n}$ is a root of $L(x)$. Let $\Phi_{2n}(x)$ denote the minimal polynomial of $\zeta_{2n}$. Then $\Phi_{2n}(x)$ divides $L(x)$.

\noindent\emph{Case 1.}
If $n$ is power of $2$, then $\Phi_{2n}(x)=1+x^{n}$. Performing long division of  polynomials, the quotient of the division of $L(x)$ by $\Phi_{2n}(x)$ is
$$\sum_{k=1}^{n-1}l_{k}x^{n-k},$$
and the remainder is
$$
(l_{n-1}-l_{1})x^{n-1}+(l_{n-2}-l_{2})x^{n-2}+\cdots+(l_{1}-l_{n-1})x.
$$
Thus $\Phi_{2n}(x)$ divides $L(x)$ if and only if $l_{k}=l_{n-k}$, for all $k=1,2,\ldots,n-1$. By (\ref{InSupp-11}) and Theorem \ref{FequalS-11}, we must have that $\sum\limits_{i=1}^{n-1} \sigma_{i} l_{i}$ is even. Thus, $P_{n}$ has Edge-LPGST from $\{a,a+1\}$ to $\{n-a,n-a+1\}$ in this case.

\noindent\emph{Case 2.}
If $n$ is odd prime, then $\Phi_{2n}(x)=1-x+x^{2}-\cdots-x^{n-2}+x^{n-1}$. Performing long division of polynomials, the quotient of the division of $L(x)$ by $\Phi_{2n}(x)$ is
$$(l_{n-1}-l_{1})+l_{n-1}x+\sum_{k=2}^{n-1}(l_{k}+l_{k-1})x^{n-k+1}+l_{1}x^{n},$$
and the remainder is
 \begin{align*}
 &((l_{n-2}-l_{2})+(l_{n-1}-l_{1}))x^{n-2}+((l_{n-3}-l_{3})-(l_{n-1}-l_{1}))x^{n-3}+\cdots+\\
 &((l_{2}-l_{n-2})-(l_{n-1}-l_{1}))x^{2}+((l_{1}-l_{n-1})+(l_{n-1}-l_{1}))x-(l_{n-1}-l_{1}).
 \end{align*}
Thus $\Phi_{2n}(x)$ divides $L(x)$ if and only if $l_{k}=l_{n-k}$, for all $k=1,2,\ldots,n-1$. Then $\sum\limits_{i \text{~is~even}}l_{i}=\sum\limits_{i\text{~is~odd}}l_{i}$. Together with (\ref{Po2Prime--eq2}), we have $\sum\limits_{i \text{~is~even}}l_{i}=\sum\limits_{i\text{~is~odd}}l_{i}=0$. By (\ref{InSupp-11}) and Theorem \ref{FequalS-11}, we must have that $\sum\limits_{i=1}^{n-1} \sigma_{i} l_{i}=0$ is even. Thus, $P_{n}$ has Edge-LPGST from $\{a,a+1\}$ to $\{n-a,n-a+1\}$ in this case.

This completes the proof. \qed
\end{proof}

Next, we study the existence of Edge-LPGST in $P_{n}$ from $\{a,a+1\}$ to $\{n-a,n-a+1\}$, where $n$ is neither a power of $2$ nor an odd prime number and $1\leq a\leq n-1$ is an integer.

The following result plays an important role in the sequel.

\begin{lemma}\emph{(See \cite{Bo19}, Lemma 4.3.2)}\label{composite}
Let $n=km$, where $k$ is a positive integer and $m>1$ is an odd integer. If $0\leq c<k$ is an integer, then
$$\sum_{j=0}^{m-1}(-1)^{j}\cos\frac{(c+jk)\pi}{n}=0.$$
\end{lemma}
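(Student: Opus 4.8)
The plan is to pass to complex exponentials and recognise the sum as a finite geometric series. Writing $\cos\frac{(c+jk)\pi}{n}=\Re\bigl(e^{\mathrm{i}(c+jk)\pi/n}\bigr)$ and using $n=km$ (so that $k\pi/n=\pi/m$), one gets
$$
\sum_{j=0}^{m-1}(-1)^{j}\cos\frac{(c+jk)\pi}{n}
=\Re\!\left(e^{\mathrm{i}c\pi/n}\sum_{j=0}^{m-1}(-1)^{j}e^{\mathrm{i}jk\pi/(km)}\right)
=\Re\!\left(e^{\mathrm{i}c\pi/n}\sum_{j=0}^{m-1}\bigl(-e^{\mathrm{i}\pi/m}\bigr)^{j}\right).
$$
Thus the inner sum is a geometric series with common ratio $\rho:=-e^{\mathrm{i}\pi/m}$, and the quantity we want is the real part of $e^{\mathrm{i}c\pi/n}$ times that series.

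Next I would evaluate the geometric series. Because $m$ is odd, $\rho^{m}=(-1)^{m}e^{\mathrm{i}\pi}=(-1)^{m+1}=1$, so the numerator in $\sum_{j=0}^{m-1}\rho^{j}=\frac{1-\rho^{m}}{1-\rho}$ vanishes. The only thing to verify is that the denominator does not also vanish, i.e.\ $\rho\neq 1$; otherwise we would have $e^{\mathrm{i}\pi/m}=-1$, forcing $1/m$ to be an odd integer, which is impossible since $0<1/m<1$ for $m>1$. Hence $\sum_{j=0}^{m-1}\rho^{j}=0$, and therefore the original sum equals $\Re\bigl(e^{\mathrm{i}c\pi/n}\cdot 0\bigr)=0$, irrespective of the value of $c$ (which only entered through the unimodular prefactor $e^{\mathrm{i}c\pi/n}$).

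There is no real obstacle here: the argument is a one-line computation once the sum is rewritten with complex exponentials. The only place the hypotheses are used is the pair ``$m$ odd'' — needed to get $\rho^{m}=1$ — together with ``$m>1$'' — needed so that $\rho\neq 1$ and the geometric series is a genuine null sum rather than the indeterminate $0/0$. (One could also give a purely real proof by grouping terms via product-to-sum identities, but the geometric-series route above is the shortest.)
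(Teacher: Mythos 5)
Your proof is correct: the reduction to the geometric series with ratio $\rho=-e^{\mathrm{i}\pi/m}$, the observation that $\rho^{m}=(-1)^{m+1}=1$ because $m$ is odd, and the check that $\rho\neq 1$ because $m>1$ together give the result cleanly. Note that the paper itself offers no proof of this lemma --- it is quoted verbatim from van Bommel's thesis (Lemma 4.3.2) --- so there is nothing internal to compare against; your argument is a valid self-contained verification, and it even shows the hypothesis $0\leq c<k$ is superfluous, since $c$ enters only through the unimodular prefactor $e^{\mathrm{i}c\pi/n}$.
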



\begin{theorem}\label{nopgst1}
Let $n=2^{t}r$, where $t$ is a nonnegative integer and $r$ is an odd composite number. Then $P_{n}$ has no Edge-LPGST from $\{a,a+1\}$ to $\{n-a,n-a+1\}$, where $1\leq a\leq n-1$ is an integer.
\end{theorem}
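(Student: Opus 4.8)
The plan is to contradict Condition (2) of Theorem \ref{lp} by exhibiting integers $\{l_k\}_{k=1}^{n-1}$ that satisfy the two linear relations \eqref{Po2Prime--eq1} and \eqref{Po2Prime--eq2} (equivalently, that make $\Phi_{2n}(x)$ divide the polynomial $L(x)$) but for which $\sum_{i} \sigma_i l_i$ is odd. Writing $r = km$ with $m>1$ an odd integer and $1<k$, so that $n = 2^t k m$, the idea is to build such an $l$ supported on an arithmetic progression with common difference $2^t k$ (or $n/m$), using the alternating-cosine identity of Lemma \ref{composite} to force the two linear constraints automatically. Concretely, for a suitably chosen residue $c$ I would set $l_{c + j n/m} = (-1)^j$ for $j = 0, 1, \ldots, m-1$ and $l_k = 0$ otherwise; Lemma \ref{composite} (applied with the role of $k$ there played by $n/m$) gives $\sum_j (-1)^j \cos\frac{(c + j n/m)\pi}{n} = 0$, which is exactly \eqref{Po2Prime--eq1} after recalling $\theta_k = 2 - 2\cos\frac{k\pi}{n}$, while $\sum_j (-1)^j = 0$ since $m$ is odd gives \eqref{Po2Prime--eq2}.

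First I would fix notation and reduce to the case that needs the most care, recording that $\theta_k \notin S \iff n \mid ak$ and that $\sigma_k = 1$ exactly when $k$ is even and $n \nmid ak$ (from Theorem \ref{FequalS-11} and \eqref{Def-sigma-11}). Then I would verify that the indices $c + j n/m$ all lie in the range $\{1, \ldots, n-1\}$ for an appropriate choice of $c$ in $\{1, \ldots, n/m - 1\}$ or, if necessary, fold the progression using $l_k = l_{n-k}$ symmetry as in the proof of Theorem \ref{2nlpgst} so that one never uses an index outside $[1, n-1]$ (note $c + (m-1)n/m = n - (n/m - c) < n$ provided $c \geq 1$, and $c \geq 1$ handles the lower end, so a single careful choice of $c$ suffices). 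Next I would confirm the two linear relations hold via Lemma \ref{composite} as above. The crux is then the parity computation: I must choose $c$ (and possibly scale, though scaling by $2$ would kill the argument, so $c$ must be chosen so the raw sum is already odd) so that $\sum_{j=0}^{m-1} (-1)^j \sigma_{c + j n/m}$ is odd. Since $\sigma_{c+jn/m} = 1$ precisely when $c + j n/m$ is even and $n \nmid a(c + j n/m)$, I would analyze the parity of $c + j n/m$ as $j$ ranges: if $n/m$ is even (which happens when $t \geq 1$ or when $k$ is even) the parity of $c + j n/m$ is constant in $j$, so the alternating sum $\sum (-1)^j \sigma_{c+jn/m}$ over an odd number of equal terms is $\pm(\text{number of }j\text{ with }n \nmid a(c+jn/m))$ up to sign, and I would pick $c$ to control this count's parity; if $n/m$ is odd the parities alternate and the bookkeeping is slightly different but analogous.

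The main obstacle I anticipate is precisely this last parity bookkeeping: I need to guarantee simultaneously that (i) not all the indices $c + jn/m$ are dumped into the complement of $S$ (i.e., the support condition $n \nmid a(c+jn/m)$ must hold for an odd number of the relevant $j$'s with the right signs), and (ii) the chosen progression stays inside $[1,n-1]$. The interaction with the parameter $a$ is the subtle point — for some residues $c$ the divisibility $n \mid a(c + jn/m)$ could hold for many $j$, wrecking the count — so I expect to need a case split on $\gcd(a, m)$ and on the parity of $n/m$, choosing $c$ coprime to small factors or choosing $c$ among a residue class that avoids $\frac{n}{\gcd(a,n)} \mid (c + jn/m)$. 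A clean way to sidestep part of this: take $m$ to be the largest odd factor structure available (e.g. reduce to $r = p \cdot s$ with $p$ an odd prime dividing $r$ and use $m = p$), so that $m$ is prime and the only way $n \mid a(c+jn/m)$ for two distinct $j$ in range is ruled out by size, leaving at most one ``bad'' $j$ and hence an odd count after adjusting $c$ by $1$ if needed. I would then conclude that Condition (2) of Theorem \ref{lp} fails, so $P_n$ has no Edge-LPGST from $\{a, a+1\}$ to $\{n-a, n-a+1\}$.
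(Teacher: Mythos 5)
Your overall strategy (violate Condition (2) of Theorem \ref{lp} by building an integer vector $\{l_k\}$ supported on alternating arithmetic progressions and invoking Lemma \ref{composite}) is the same as the paper's, but the concrete construction you propose does not satisfy the two linear relations. If you set $l_{c+jn/m}=(-1)^j$ for $j=0,\dots,m-1$ and $l_k=0$ otherwise, then since $m$ is \emph{odd} you get $\sum_k l_k=\sum_{j=0}^{m-1}(-1)^j=1\neq 0$, not $0$ as you claim; consequently $\sum_k l_k\theta_k = 2\sum_j(-1)^j-2\sum_j(-1)^j\cos\frac{(c+jn/m)\pi}{n}=2\neq 0$ as well, so both \eqref{Po2Prime--eq1} and \eqref{Po2Prime--eq2} fail. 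The paper's fix is to take the \emph{difference} of two such alternating progressions with starting residues $c=1$ and $c=2$ (i.e.\ $l_k=+1$ for $k\equiv 1,\,2^{t}+2$ and $l_k=-1$ for $k\equiv 2,\,2^{t}+1$ modulo $2^{t+1}$, or modulo $2^{t+1}p$ or $2p$ in the other cases); the two constant terms and the two alternating-sign sums then cancel, and the parity count comes out to $\sum_k\sigma_k l_k=-1$.

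The second issue is that the parity/support bookkeeping you identify as the main obstacle is left unresolved in your sketch, and your proposed remedies are only partly in the right direction. The paper settles it with a clean case split on whether $r$ has a prime factor $p$ dividing $\tfrac{n}{\gcd(a,n)}$. If no such $p$ exists (possible only when $t>0$), then $r\mid a$, which forces $a\neq 2^{t-1}r$ to give $t\ge 2$ and $4\mid\tfrac{n}{\gcd(a,n)}$; hence every index $k\not\equiv 0\pmod 4$ lies in the eigenvalue support, and the progressions with common difference $2^{t}$ (residues $1,2,2^t+1,2^t+2$ mod $2^{t+1}$) all consist of such indices. If such a $p$ exists, one instead uses common difference $2^{t}p$ (applying Lemma \ref{composite} with $m=r/p$, which is where the hypothesis that $r$ is \emph{composite} enters), so that all chosen indices are coprime to $p$ and hence in the support. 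Your suggestion to reduce to a prime factor $p$ of $r$ is close to the second case, but note that the relevant condition is $p\mid\tfrac{n}{\gcd(a,n)}$, not a condition on $\gcd(a,m)$ alone, and the first case genuinely requires the separate $4\mid\tfrac{n}{\gcd(a,n)}$ argument. As written, your proposal has a gap at both the construction step and the parity step.
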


\begin{proof}
\noindent\emph{Case 1.} $t>0$. We consider the following  cases.

\noindent\emph{Case 1.1.} $r$ has no prime factor $p$ such that $p\left|~\frac{2^{t}r}{\gcd(a,2^{t}r)}\right.$.
In this case, $r|\gcd(a,2^{t}r)$, and then $r|a$. Note that $\{a,a+1\}$ and $\{n-a,n-a+1\}$ are two distinct pairs of vertices. Then $a\neq2^{t-1}r$, that is, $r\le a\leq(2^{t-1}-1)r$. Thus, we have $t\geq2$ and $4\left|~\frac{2^{t}r}{\gcd(a,2^{t}r)}\right.$. By Theorem \ref{FequalS-11}, if $k$ is not a multiple of $4$, then $\theta_{k}\in S:=\mathrm{{supp}}_{L_G}(\mathbf{e}_{a}-\mathbf{e}_{a+1}) $. For $1\le k \le n-1$, define
 \begin{equation*}
	\label{relation}
l_{k}=\left\{
           \begin{array}{rl}
    1,   &\text{if~}k\equiv1, 2^{t}+2 \pmod {2^{t+1}},\\[0.2cm]
   -1,   &\text{if~}k\equiv2, 2^{t}+1 \pmod {2^{t+1}},\\[0.2cm]
   0,   & \text{otherwise.}
  \end{array}\right.
\end{equation*}
Then
$$
\sum_{k=1}^{n-1}l_{k}=0.
$$
For $c\in\{1,2\}$, by Lemma \ref{composite}, we have
$$
\sum_{j=0}^{r-1}(-1)^{j}\theta_{c+j2^{t}} =\sum_{j=0}^{r-1}(-1)^{j}\left(2-2\cos\frac{(c+j2^{t})\pi}{n}\right)=2.
$$
Then
$$
 \sum_{k=1}^{n-1}l_{k}\theta_{k} =\sum_{j=0}^{r-1}(-1)^{j}\theta_{1+j2^{t}}
-\sum_{j=0}^{r-1}(-1)^{j}\theta_{2+j2^{t}}=0.$$
On the other hand, by Theorem \ref{FequalS-11}, if $k\equiv2, 2^{t}+2 \pmod {2^{t+1}}$, then
$$
F_{\theta_k}(\mathbf{e}_{a}-\mathbf{e}_{a+1}) =-F_{\theta_k}(\mathbf{e}_{n-a}-\mathbf{e}_{n-a+1}) \not=\mathbf{0},
$$
and if $k\equiv1, 2^{t}+1 \pmod {2^{t+1}}$, then
$$
F_{\theta_k}(\mathbf{e}_{a}-\mathbf{e}_{a+1}) =F_{\theta_k}(\mathbf{e}_{n-a}-\mathbf{e}_{n-a+1}) \not=\mathbf{0}.
$$
Thus,
$$
\sum\limits_{k=1}^{n-1}\sigma_kl_{k}=-1
$$
is odd, where $\sigma_k$ is as defined in (\ref{Def-sigma-11}). Therefore, by Theorem \ref{lp}, $P_{n}$ has no Edge-LPGST from $\{a,a+1\}$ to $\{n-a,n-a+1\}$ in this case.

\noindent\emph{Case 1.2.} $r$ has a prime factor $p$ such that $p\left|~\frac{2^{t}r}{\gcd(a,2^{t}r)}\right.$. By Theorem \ref{FequalS-11}, if $k$ is not a multiple of $p$, then $\theta_{k}\in S$.  For $1\le k \le n-1$, define
 \begin{equation*}
	\label{relation}
l_{k}'=\left\{
           \begin{array}{rl}
    1,   & \text{if~}k\equiv1,2^{t}p+2 \pmod {2^{t+1}p},\\[0.2cm]
   -1,   &\text{if~}k\equiv2,2^{t}p+1 \pmod {2^{t+1}p},\\[0.2cm]
   0,   & \text{otherwise.}
  \end{array}\right.
\end{equation*}
Then
$$
\sum_{k=1}^{n-1}l_{k}'=0.
$$
For $c\in\{1,2\}$, by Lemma \ref{composite}, we have
$$
\sum_{j=0}^{r/p-1}(-1)^{j}\theta_{c+j2^{t}p} =\sum_{j=0}^{r/p-1}(-1)^{j}\left(2-2\cos\frac{(c+j2^{t}p)\pi}{n}\right)=2.
$$
Thus,
$$\sum_{k=1}^{n-1}l_{k}'\theta_{k}=\sum_{j=0}^{r/p-1}(-1)^{j}\theta_{1+j2^{t}p}
-\sum_{j=0}^{r/p-1}(-1)^{j}\theta_{2+j2^{t}p}=0.$$
On the other hand, by Theorem \ref{FequalS-11}, if $k\equiv2, 2^{t}p+2 \pmod {2^{t+1}p}$, then
$$
F_{\theta_k}(\mathbf{e}_{a}-\mathbf{e}_{a+1}) =-F_{\theta_k}(\mathbf{e}_{n-a}-\mathbf{e}_{n-a+1}) \not=\mathbf{0},
$$
and if $k\equiv1, 2^{t}p+1 \pmod {2^{t+1}p}$, then
$$
F_{\theta_k}(\mathbf{e}_{a}-\mathbf{e}_{a+1}) =F_{\theta_k}(\mathbf{e}_{n-a}-\mathbf{e}_{n-a+1}) \not=\mathbf{0}.
$$
Thus,
$$
\sum\limits_{k=1}^{n-1}\sigma_kl_{k}'=-1
$$
is odd, where $\sigma_k$ is as defined in (\ref{Def-sigma-11}). Therefore, by Theorem \ref{lp}, $P_{n}$ has no Edge-LPGST from $\{a,a+1\}$ to $\{n-a,n-a+1\}$ in this case.

\noindent\emph{Case 2.} $t=0$.
In this case, $r$ has a prime factor $p$ such that $p\left|\frac{r}{\gcd(a,r)}\right.$. Otherwise, we have $r|a$, contradicting that $a<r=n$. By Theorem \ref{FequalS-11}, if $k$ is not a multiple of $p$, then $\theta_{k}\in S$. For $1\le k \le n-1$, define
 \begin{equation*}
	\label{relation}
l_{k}''=\left\{
           \begin{array}{rl}
    1,   & \text{if~}k\equiv1,p+2 \pmod {2p},\\[0.2cm]
   -1,   &\text{if~}k\equiv2,p+1 \pmod {2p},\\[0.2cm]
   0,    &\text{otherwise.}
   \end{array}\right.
\end{equation*}
Similar to Case 1.2, one can easily verify that
$$
\sum_{k=1}^{n-1}l_{k}''=0 \text{~~and~~} \sum_{k=1}^{n-1}l_{k}''\theta_{k} =0.$$
But,
$$
\sum\limits_{k=1}^{n-1}\sigma_kl_{k}''=-1
$$
is odd, where $\sigma_k$ is as defined in (\ref{Def-sigma-11}). Therefore, by Theorem \ref{lp}, $P_{n}$ has no Edge-LPGST from $\{a,a+1\}$ to $\{n-a,n-a+1\}$ in this case.
\qed
\end{proof}

\begin{theorem}
Let $n=2^{t}p$, where $t$ is a positive integer and $p$ is an odd prime. Then $P_{n}$ has Edge-LPGST from $\{a,a+1\}$ to $\{n-a,n-a+1\}$ if and only if $1\leq a\leq n-1$ is a multiple of $2^{t-1}$.
\end{theorem}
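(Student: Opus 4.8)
The plan is to apply Theorem~\ref{lp}. We may assume $a\ne n/2$, since otherwise the two pairs coincide and there is nothing to prove. By the case $b=a$ of Theorem~\ref{FequalS-11} the edge states $\mathbf e_{a}-\mathbf e_{a+1}$ and $\mathbf e_{n-a}-\mathbf e_{n-a+1}$ are always Laplacian strongly cospectral, so Condition~(1) of Theorem~\ref{lp} holds automatically and only Condition~(2) is in question. Write $g=\gcd(a,n)$; Theorem~\ref{FequalS-11} gives $S:=\mathrm{{supp}}_{L_{P_n}}(\mathbf e_{a}-\mathbf e_{a+1})=\{\theta_k:(n/g)\nmid k\}$, with the $\theta_k\in S$ having $k$ odd forming $\Lambda^{+}$ and those with $k$ even forming $\Lambda^{-}$, so that $\sigma_k$ as in \eqref{Def-sigma-11} equals $1$ precisely when $k$ is even and $\theta_k\in S$. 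Since $g\mid n=2^{t}p$, the integer $n/g$ has the shape $2^{i}$ or $2^{i}p$ with $0\le i\le t$; comparing $2$-adic valuations shows that $2^{t-1}\mid a\iff n/g\in\{p,2p\}$ (the cases $n/g\in\{1,2\}$ are excluded by $1\le a\le n-1$ and $a\ne n/2$), while $2^{t-1}\nmid a\iff 4\mid n/g$, which in particular forces $t\ge 2$. Thus the theorem reduces to showing that Condition~(2) of Theorem~\ref{lp} holds exactly when $n/g\in\{p,2p\}$.

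For the ``only if'' direction, suppose $4\mid n/g$ (so $t\ge2$). I would reuse the construction from Case~1.1 of the proof of Theorem~\ref{nopgst1} essentially verbatim: invoke Lemma~\ref{composite} with $k=2^{t}$ and $m=p$ (legitimate since $p>1$ is odd) and set $l_k=1$ if $k\equiv1,\,2^{t}+2\pmod{2^{t+1}}$, $l_k=-1$ if $k\equiv2,\,2^{t}+1\pmod{2^{t+1}}$, and $l_k=0$ otherwise, for $1\le k\le n-1$. Then $\sum_k l_k=0$ and, by Lemma~\ref{composite}, $\sum_k l_k\theta_k=\sum_{j=0}^{p-1}(-1)^{j}\theta_{1+j2^{t}}-\sum_{j=0}^{p-1}(-1)^{j}\theta_{2+j2^{t}}=2-2=0$. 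Every $k$ with $l_k\ne0$ satisfies $4\nmid k$, hence $(n/g)\nmid k$ since $4\mid n/g$, so $\theta_k\in S$: this is a legitimate relation among support eigenvalues. A direct count gives $\sum_{k\ \mathrm{even}}l_k=\tfrac{p-1}{2}-\tfrac{p+1}{2}=-1$, which is odd, so Condition~(2) fails and $P_n$ has no Edge-LPGST.

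For the ``if'' direction, assume $n/g\in\{p,2p\}$, and let integers $l_k$ ($\theta_k\in S$) satisfy $\sum_{\theta_k\in S}l_k\theta_k=0$ and $\sum_k l_k=0$. Extend by $l_k=0$ for $\theta_k\notin S$ and let $c_k$ be the coefficients of $L(x)=\sum_{k=1}^{n-1}l_kx^{k}+\sum_{k=n+1}^{2n-1}l_{2n-k}x^{k}$, so $c_k=l_k$ for $1\le k\le n-1$, $c_k=l_{2n-k}$ for $n+1\le k\le2n-1$, $c_0=c_n=0$ and $c_k=c_{2n-k}$. As in the proof of Theorem~\ref{2nlpgst}, $\zeta_{2n}=e^{\pi\mathrm{i}/n}$ is a root of $L$, so $\Phi_{2n}(x)\mid L(x)$. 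Now $\Phi_{2n}(x)=\Phi_{2^{t+1}p}(x)=\Phi_{2p}\!\left(x^{2^{t}}\right)$; writing $L(x)=\sum_{s=0}^{2^{t}-1}x^{s}L_s\!\left(x^{2^{t}}\right)$ with $L_s(y)=\sum_{q}c_{2^{t}q+s}\,y^{q}$, a degree count shows $\Phi_{2p}(y)\mid L_s(y)$ for every $s$. Since $\deg L_s\le 2p-1$ and $y^{p}\equiv-1\pmod{\Phi_{2p}(y)}$, this divisibility is equivalent to: for each $s$ there is an integer $\lambda_s$ with $c_{2^{t}q+s}-c_{2^{t}(q+p)+s}=\lambda_s(-1)^{q}$ for $0\le q\le p-1$. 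The support hypothesis forces the relevant $\lambda_s$ to vanish: whenever $n/g$ divides $2^{t}q_0+s$ for some $0\le q_0\le p-1$ --- which happens for every $s$ if $n/g=p$ and for every even $s$ if $n/g=2p$ --- both $c_{2^{t}q_0+s}$ and $c_{2^{t}(q_0+p)+s}$ sit at indices that are multiples of $n/g$ (or equal $0$ or $n$), hence are $0$, so $\lambda_s=0$. Consequently $c_j=c_{j+n}$ in the appropriate range, and $c_{j+n}=c_{2n-(j+n)}=c_{n-j}$ gives $l_j=l_{n-j}$ for every even $j\in\{1,\dots,n-1\}$. Since $\sigma_j=\sigma_{n-j}$ and $\sigma_j=0$ for odd $j$, pairing $j\leftrightarrow n-j$ yields $\sum_k\sigma_k l_k\equiv\sigma_{n/2}\,l_{n/2}\pmod2$ when $t\ge2$ (and $\equiv0$ when $t=1$), and $\sigma_{n/2}=0$ because $n/g$ divides $n/2=2^{t-1}p$. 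Hence $\sum_k\sigma_k l_k$ is even, Condition~(2) holds, and $P_n$ has Edge-LPGST.

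The only nonroutine step is passing from the divisibility $\Phi_{2n}(x)\mid L(x)$ to the explicit linear relations on the $c_k$; the factorization $\Phi_{2^{t+1}p}(x)=\Phi_{2p}(x^{2^{t}})$ is what makes this possible, decoupling the divisibility into the $2^{t}$ residue classes modulo $2^{t}$ and reducing each to a divisibility by $\Phi_{2p}(y)=(y^{p}+1)/(y+1)$ of a polynomial of degree at most $2p-1$, in which exactly one integer parameter $\lambda_s$ per class survives. One then has to notice that being supported on $S$ annihilates precisely those parameters needed for the $j\leftrightarrow n-j$ parity argument. Everything else --- the $2$-adic bookkeeping in the first paragraph, the count in the ``only if'' part, and the pairing in the ``if'' part --- is mechanical.
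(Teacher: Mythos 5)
Your proposal is correct and follows essentially the same route as the paper: both directions reduce to Condition (2) of Theorem~\ref{lp}, the negative direction reuses the Lemma~\ref{composite} construction with residues $1,2,2^{t}+1,2^{t}+2 \pmod{2^{t+1}}$, and the positive direction extracts the symmetry $l_j=l_{n-j}$ for even $j$ from $\Phi_{2n}(x)\mid L(x)$ and finishes by pairing $j\leftrightarrow n-j$. The only difference is cosmetic: you obtain the linear relations on the coefficients by splitting $L$ into residue classes modulo $2^{t}$ via $\Phi_{2n}(x)=\Phi_{2p}(x^{2^{t}})$ and reducing modulo $y^{p}+1$, whereas the paper performs an explicit long division; your bookkeeping via $n/\gcd(a,n)\in\{p,2p\}$ is equivalent to the paper's case split on whether $2^{t-1}\mid a$.
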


\begin{proof}
We use the method of proving Theorem \ref{2nlpgst} to complete the proof. Let $\zeta_{2n}=e^{\pi \mathrm{i}/n}$. Recall Lemma \ref{Ei-ValueVector-1} that the Laplacian eigenvalues of $P_{n}$ are
$$
\theta_{k}=2-2\cos\frac{\pi k}{n}=2-\left(\zeta_{2n}^{k}+\zeta_{2n}^{2n-k}\right),~~k=0,1,2,\ldots,n-1.
$$
Assume that there are integers $l_{1},l_{2},\ldots,l_{n-1}$ such that \begin{align}
\label{Th3.6--Po2Prime--eq1} \sum_{k=1}^{n-1}l_{k}\theta_{k}& =\sum_{k=1}^{n-1}l_{k}\left(2-\left(\zeta_{2n}^{k}+\zeta_{2n}^{2n-k}\right)\right)=0, \\
\label{Th3.6--Po2Prime--eq2} \sum_{k=1}^{n-1}l_{k}&=0.
\end{align}
Define
$$
L(x)=\sum_{k=1}^{n-1}l_{k}x^{k}+\sum_{k=n+1}^{2n-1}l_{2n-k}x^{k}.
$$
By (\ref{Th3.6--Po2Prime--eq1}) and (\ref{Th3.6--Po2Prime--eq2}), $\zeta_{2n}$ is a root of $L(x)$. Let $\Phi_{2n}(x)$ denote the minimal polynomial of $\zeta_{2n}$. Then
$$
\Phi_{2n}(x)=\sum_{i=0}^{p-1}(-1)^{i}x^{2^{t}i},
$$
and $\Phi_{2n}(x)$ divides $L(x)$. Performing long division of  polynomials, the quotient of the division of $L(x)$ by $\Phi_{2n}(x)$ is
$$\sum_{i=1}^{2^{t}}l_{i}x^{2^{t}(p+1)-i} +\sum_{i=2^{t}+1}^{2^{t}p-1}(l_{i}+l_{i-2^t})x^{2^{t}(p+1)-i}
+l_{2^{t}(p-1)}x^{2^t} +\sum_{i=1}^{2^{t}-1}(l_{2^{t}p-i}+l_{2^{t}(p-1)+i}-l_i)x^{2^{t}-i} +(l_{2^{t}(p-1)}-l_{2^t}),$$
and the remainder is
\begin{align*}
&\sum_{i=1}^{2^{t}}((l_{2^{t}(p-1)-i}-l_{2^{t}+i})+(l_{2^{t}p-i}-l_{i}))x^{2^{t}(p-1)-i}
+\sum_{i=1}^{2^{t}}((l_{2^{t}(p-2)-i}-l_{2^{t}\times2+i})-(l_{2^{t}p-i}-l_{i}))x^{2^{t}(p-2)-i}\\
&+\cdots+\sum_{i=1}^{2^{t}}((l_{2^{t}\times2-i}-l_{2^{t}(p-2)+i})+(l_{2^{t}p-i}-l_{i}))x^{2^{t}\times2-i}\\
&+\sum_{i=1}^{2^{t}-1}((l_{2^{t}-i}-l_{2^{t}(p-1)+i})-(l_{2^{t}p-i}-l_{i}))x^{2^{t}-i} -(l_{2^{t}(p-1)}-l_{2^{t}}).
\end{align*}
Thus $\Phi_{2n}(x)$ divides $L(x)$ if and only if  for $i=1,2,\ldots,2^{t}-1$ and $j=1,2, \ldots,p-1$,
\begin{align}
\label{Lequal-1}l_{2^{t}(p-j)- i}-l_{2^{t}j+ i}&=(-1)^{j}(l_{2^{t}p-i}-l_{i}),\\[0.2cm]
\label{Lequal-2} l_{2^{t}(p-j)}-l_{2^{t}j}&=0.
\end{align}
\noindent\emph{Case 1.} $a$ is a multiple of $2^{t-1}$. Let $S:=\mathrm{{supp}}_{L_G}(\mathbf{e}_{a}-\mathbf{e}_{a+1})$. By Theorem \ref{FequalS-11}, if $2p| k$, then $\theta_{k}\notin S$. In the following, we set $l_{k}=0$ if $\theta_{k}\notin S$.

\noindent\emph{Case 1.1.} $t\ge2$. Note that $2^{t}\neq0\pmod {2p}$. Then for any $i\in\{2,4,\ldots,2^{t}-2\}$ and $2p \nmid i$, there is $j_i\in \{1,2,\ldots,p-1\}$ such that $2^{t}j_i\equiv -i \pmod {2p}$, that is, $2p\mid(2^{t}j_i+i)$. Thus, we have $\theta_{2^{t}j_i+i}\notin S$ and $l_{2^{t}j_i+i}=0$ for any $i\in\{2,4,\ldots,2^{t}-2\}$ and $2p \nmid i$. Recall (\ref{InSupp-11}) that
$$
\theta_k \notin S \text{~if~and~only~if~}\theta_{n-k} \notin S, \text{~~for~}k=1,2,\ldots,n-1.
$$
Then we have $l_{2^{t}(p-j_i)-i}=l_{2^{t}j_i+i}=0$ for any $i\in\{2,4,\ldots,2^{t}-2\}$ and $2p \nmid i$. By (\ref{Lequal-1}),  $l_{i}=l_{n-i}$ for any $i\in\{2,4,\ldots,2^{t}-2\}$ and $2p \nmid i$. Notice also that $l_{i}=l_{n-i}$ if $i\in\{2,4,\ldots,2^{t}-2\}$ and $2p | i$. Thus, $l_{i}=l_{n-i}$ for any $i\in\{2,4,\ldots,2^{t}-2\}$. This together with (\ref{Lequal-1}) and (\ref{Lequal-2}) implies that $l_{i}=l_{n-i}$ for any even $i\in\{2,4,\ldots,n-2\}$. By Theorem \ref{FequalS-11}, we must have that $\sum\limits_{k=1}^{n-1} \sigma_{k} l_{k}$ is even, where $\sigma_k$ is as defined in (\ref{Def-sigma-11}).  Therefore, by Theorem \ref{lp}, $P_{n}$ has Edge-LPGST from $\{a,a+1\}$ to $\{n-a,n-a+1\}$ in this case.

\noindent\emph{Case 1.2.} $t=1$. By (\ref{Lequal-2}), we have $l_{2(p-j)}-l_{2j}=0$ for $j=1,2, \ldots,p-1$, that is, $l_{i}=l_{n-i}$ for any even $i\in\{2,4,\ldots,n-2\}$. Similar to Case 1.1, by Theorems \ref{FequalS-11} and \ref{lp}, $P_{n}$ has Edge-LPGST from $\{a,a+1\}$ to $\{n-a,n-a+1\}$ in this case.

\noindent\emph{Case~2.}
$a$ is not a multiple of $2^{t-1}$. In this case, $t\ge2$. Otherwise, if $t=1$, then $2^{t-1}=1$. This implies that any $a$ is a multiple of $2^{t-1}$, a contradiction to the assumption.  So, we have $4\left|~\frac{2^{t}p}{\gcd(a,2^{t}p)}\right.$. By Theorem \ref{FequalS-11}, if $k$ is not a multiple of $4$, then $\theta_{k}\in S:=\mathrm{{supp}}_{L_G}(\mathbf{e}_{a}-\mathbf{e}_{a+1}) $. By a slight refinement of the proof of Case 1.1 in Theorem \ref{nopgst1}, one can easily verify that $P_{n}$ has no Edge-LPGST between $\{a,a+1\}$ and $\{n-a,n-a+1\}$.

This completes the proof. \qed

\end{proof}


\begin{thebibliography}{99}
\small{

\bibitem{Ack}
E. Ackelsberg, Z. Brehm, A. Chan, J. Mundinger,  C. Tamon, Laplacian state transfer in coronas, Linear Algebra Appl. 506 (2016)  154--167.

\bibitem{AckBCMT16}
E. Ackelsberg, Z. Brehm, A. Chan, J. Mundinger,  C. Tamon, Quantum state transfer in coronas, Electron. J. Combin. 24 (2) (2017) \#P2.24.

\bibitem{AlvirDLM16}R. Alvir, S. Dever, B. Lovitz, J. Myer, C. Tamon, Y. Xu, H. Zhan, Perfect state transfer in Laplacian quantum walk, J. Algebraic Combin. 43 (4) (2016) 801--826.

\bibitem{Angeles10}
R. J. Angeles-Canul, R. M. Norton, M. C. Opperman, C. C. Paribello, M. C. Russell, C. Tamon, Perfect state transfer, integral circulants, and join of graphs, Quantum Inf. Comput. 10 (3\&4) (2010)  0325--0342.


\bibitem{Ban17}
L. Banchi, G. Coutinho, C. Godsil, S. Severini, Pretty good state transfer in qubit
chains-The Heisenberg Hamiltonian, J. Math. Phys. 58, 032202 (2017).

\bibitem{Basic13}
M. Ba{\v{s}}i{\'c}, Characterization of circulant networks having perfect state transfer, Quantum Inf. Process. 12 (1) (2013) 345--364.


\bibitem{Bo19}
C. M. van Bommel, Quantum walks and pretty good state transfer on
paths, PhD thesis, University of Waterloo, 2019.


\bibitem{van}
C. M. van Bommel, A complete characterization of pretty good state transfer on paths, Quantum Inf. Comput. 19 (7\&8) (2019) 0601--0608.

\bibitem{Bose03} S. Bose, Quantum communication through an unmodulated spin chain, Phys. Rev. Lett. 91 (20) (2003) 207901.


\bibitem{Cao2021} X. Cao, Perfect edge state transfer on cubelike graphs, Quantum Inf. Process. 20 (2021) 285.

\bibitem{CaoCL20} X. Cao, B. Chen,  S. Ling, Perfect state transfer on Cayley graphs over dihedral groups: the non-normal case, Electron. J. Combin. 27 (2) (2020) \#P2.28.

\bibitem{CaoF21} X. Cao, K. Feng, Perfect state transfer on Cayley graphs over dihedral groups, Linear Multilinear Algebra 69 (2) (2021) 343--360.

\bibitem{CaoWF20}  X. Cao, D. Wang, K. Feng, Pretty good state transfer on Cayley graphs over dihedral groups, Discrete Math. 343 (1)  (2020)  111636.


\bibitem{QCh18}
Q. Chen, Edge state transfer, Master thesis, University of Waterloo, 2018.

\bibitem{Chen20}
Q. Chen, C. Godsil, Pair state transfer, Quantum Inf. Process. 19 (2020) 321.

\bibitem{CC}
W.-C. Cheung, C. Godsil, Perfect state transfer in cubelike graphs, Linear Algebra Appl. 435 (2011) 2468--2474.


\bibitem{Coutinho14}
G. Coutinho, Quantum State Transfer in Graphs, PhD thesis, University of Waterloo, 2014.

\bibitem{Coh19}
G. Coutinho, Spectrally extremal vertices, strong
cospectrality and state transfer, Electron. J. Combin. 23 (1) (2016) \#P1.46.


\bibitem{Coutinho15}
G. Coutinho, C. Godsil, K. Guo, F. Vanhove, Perfect state transfer on distance-regular graphs and association schemes, Linear Algebra Appl. 478 (2015) 108--130.



\bibitem{CouG17}
G. Coutinho, K. Guo, C. M. van Bommel, Pretty good state transfer between internal
nodes of paths, Quantum Inf. Comput. 17 (9\&10)  (2017) 0825--0830.

\bibitem{CoutinhoL2015}
G. Coutinho, H. Liu, No Laplacian perfect state transfer in trees, SIAM J. Discrete Math. 29 (4) (2015) 2179--2188.



\bibitem{Cao20}
X. Cao, D. Wang, K. Feng, Pretty good state transfer on Cayley graphs over dihedral
groups, Discrete Math. 343 (1) (2020) 111636.

\bibitem{chris1}
M. Christandl, N. Datta, A. Ekert, A. Landahl, Perfect state transfer in quantum spin networks, Phys. Rev. Lett. 92 (18) (2004) 187902.

\bibitem{chris2}
M. Christandl, N. Datta, T. Dorlas, A. Ekert, A. Kay, A. Landahl, Perfect transfer of arbitrary states in quantum spin networks, Phys. Rev. A 71 (2005) 032312.


\bibitem{Chri17}
M. Christandl, L. Vinet, A. Zhedanov, Analytic next-to-nearest-neighbor XX models with perfect state transfer and fractional revival, Phys. Rev. A 96 (3) (2017) 032335.

\bibitem{Eisen19}
O. Eisenberg, M. Kempton, G. Lippner, Pretty good quantum state transfer in asymmetric graphs via potential, Discrete Math. 342 (10) (2019) 2821--2833.


\bibitem{Godsil93} C. Godsil, Algebraic Combinatorics,  Chapman and Hall, New York, 1993.

\bibitem{CGodsil}
C. Godsil, State transfer on graphs, Discrete Math. 312 (1) (2012) 129--147.


\bibitem{Godsil12}
C. Godsil, When can perfect state transfer occur?,  Electron. J. Linear Algebra 23 (2012) 877--890.


\bibitem{GOD12}
C. Godsil, S. Kirkland, S. Severini, J. Smith, Number-theoretic nature of communication in quantum spin systems, Phys. Rev. Lett. 109 (5) (2012) 050502.

\bibitem{HornJ2013}
R. A. Horn, C. R. Johnson, Matrix Analysis, 2nd ed., Cambridge University Press, New York, 2013.



\bibitem{Kay10}
A. Kay, Perfect, efficient, state transfer and its application as a constructive tool, Int. J.
Quantum Inform. 8 (4) (2010) 641--676.


\bibitem{Kemp17}
M. Kempton, G. Lippner, S. Yau, Perfect state transfer on graphs with a potential, Quantum Inf. Comput. 17 (3--4) (2017) 303--327.

\bibitem{LeZ82}
B. M. Levitan, V. V. Zhikov, Almost Periodic Functions and Differential Equations, Cambridge University Press, New York, 1982.

\bibitem{LiLZZ21}
Y. Li, X. Liu, S. Zhang, S. Zhou, Perfect state transfer in NEPS of complete graphs, Discrete Appl. Math. 289 (2021) 98--114.

\bibitem{LCXC21}
G. Luo, X. Cao, G. Xu, Y. Cheng, Cayley graphs of dihedral groups having perfect edge state transfer, Linear Multilinear Algebra (2021), \url{https://doi.org/10.1080/03081087.2021.1940806}.


\bibitem{Meyer2001}
C. D. Meyer, Matrix Analysis and Applied Linear Algebra, SIAM, 2001.



\bibitem{HPal3}
H. Pal, More circulant graphs exhibiting pretty good state transfer, Discrete Math.  341 (4) (2018) 889--895.


\bibitem{HPal1}
H. Pal,  B. Bhattacharjya, Perfect state transfer on NEPS of the path on three vertices, Discrete Math. 339 (2016) 831--838.

\bibitem{HPal5}
H. Pal,  B. Bhattacharjya, Pretty good state transfer on circulant graphs, Electron. J. Combin. 24 (2) (2017) \#P2.23.

\bibitem{HPal4}
H. Pal,  B. Bhattacharjya, Pretty good state transfer on some NEPS, Discrete Math. 340 (4) (2017) 746--752.


\bibitem{Tan19}
Y. Tan, K. Feng, and X. Cao, Perfect state transfer on abelian
Cayley graphs, Linear Algebra Appl. 563 (2019) 331--352.





\bibitem{WangLJ21} W. Wang, X. Liu, J. Wang, Laplacian pair state transfer in vertex coronas, Linear Multilinear Algebra (2022), \url{https://doi.org/10.1080/03081087.2022.2097158}.


\bibitem{SZ}
S. Zheng, X. Liu, S. Zhang, Perfect state transfer in NEPS of some graphs,
  Linear Multilinear Algebra  68 (8) (2020) 1518--1533.

  \bibitem{HZ}
J. Zhou, C. Bu, J. Shen, Some results for the periodicity and perfect state transfer, Electron. J. Combin. 18 (2011) \#P184.

\bibitem{Zhou14}
J. Zhou, C. Bu, State transfer and star complements in graphs, Discrete Appl. Math. 176 (2014) 130--134.

}

\end{thebibliography}
\end{document}